\documentclass[11pt,reqno]{amsart}
\usepackage{amssymb}
\usepackage{amsmath}
\usepackage{amsthm}
\usepackage{color}
\textwidth 16.00cm \textheight 24.2cm \topmargin -1.2cm \oddsidemargin 0.0cm \evensidemargin 0.0cm
\parskip 0.0cm

\makeatletter
\def\LaTeX{\leavevmode L\raise.42ex
    \hbox{\kern-.3em\size{\sf@size}{0pt}\selectfont A}\kern-.15em\TeX}
\makeatother

\newcommand{\BibTeX}{{\rm B\kern-.05em{\sc
          i\kern-.025emb}\kern-.08em\TeX}}

\makeatletter
\def\@currentlabel{2.1}\label{e:dispaa}
\def\@currentlabel{2.21}\label{e:dispau}
\def\@currentlabel{2.22}\label{e:dispav}
\def\@currentlabel{2.23}\label{e:dispaw}
\def\@currentlabel{2.24}\label{e:dispax}
\def\theequation{\thesection.\@arabic\c@equation}
\makeatother

\newcounter{mnotecount}[section]

\newcommand{\rmnote}[1]{}

\renewcommand{\theequation}{\arabic{section}.\arabic{equation}}
\newtheorem{thm}{Theorem}[section]

\newtheorem{cor}[thm]{Corollary}

\newtheorem{rem}[thm]{Remark}

\newtheorem{lemma}[thm]{Lemma}



\newcommand{\R}{\mathbb{R}}
\newcommand{\Z}{\mathbb{Z}}

\begin{document}
\title[the Hilbert transform along variable non-flat curves]{$L^p$ bound for the Hilbert transform along variable non-flat curves}

\author[R. Wan]{ Renhui Wan}

\address{Institute of Mathematics, School of Mathematical Sciences, Nanjing Normal University, Nanjing 210023, China}

\email{rhwanmath@163.com}

 \date{\today}
\numberwithin{equation}{section}
\subjclass[2010]{42B10;42B20}
\keywords{Hilbert transform; variable non-flat curves; shifted maximal function}
 \maketitle
\begin{abstract}
We prove the $L^p$ bound for the  Hilbert transform along variable non-flat curves $(t,u(x)[t]^\alpha+v(x)[t]^\beta)$, where $\alpha$ and $\beta$ satisfy
 $\alpha\neq \beta,\ \alpha\neq 1,\ \beta\neq 1.$
 Comparing  with the associated theorem in \cite{GHLJ} investigating the case $\alpha=\beta\neq 1$, our result is more general while the proof is more involved. To achieve our goal,   we divide the frequency of the objective function  into three cases and  take different strategies to control these cases. Furthermore, we need to introduce a ``short" shift maximal function $\mathfrak{M}^{[n]}$ to establish some pointwise estimate.
\end{abstract}
\maketitle
\vskip .3in
\section{Introduction}
\label{s1}
\vskip .1in
Let $u:\R\times\R\rightarrow \R$ be a measurable function, the Hilbert transform along  variable curves $(t,u(x,y)\cdot [t]^\alpha)$ is defined by
\begin{equation*}
\mathcal{H}_{\alpha}f (x,y):=p.v.\int_{\R} f(x-t,y-u(x,y)[t]^\alpha)  \frac{dt}{t},\ \alpha>0,\
\end{equation*}
where $[t]^\alpha$ represents  either $|t|^\alpha$ or sgn$(t)|t|^\alpha$ for  $\alpha\neq 1$ and   $[t]^1$ stands for $t$.
The research of $\mathcal{H}_{\alpha}$ is considered as  a complement of the works on the $L^p$ boundedness of the maximal operator along  variable curves $(t,u(x,y)\cdot [t]^\alpha)$ given by
$$\mathcal{M}_\alpha f(x,y):=\sup_{\epsilon>0}\frac{1}{2\epsilon}
\int_{-\epsilon}^\epsilon\left|f(x-t,y-u(x,y)[t]^\alpha)
\right|dt.$$
We refer \cite{B89}, \cite{CNSW99} and reference therein for the investigation of the above maximal function.
\vskip.1in
Stein and Street \cite{SS12} proved that $\mathcal{H}_{\alpha} $ is bounded on $L^p$ for $p>1$ under the assumption that $u\in \mathbb{N}$ is analytic. Indeed, the objects in that work is all polynomials with analytic coefficients. Note that this analytic condition for $u$  plays a crucial role in their proof.
Based on time-frequency techniques, Lacey and Li \cite{LL06} removed this type of  regularity assumption. More precisely,   they achieved  the following two single annulus estimates for an arbitrary measurable function $u$:
$$\|\mathcal{H}_{1}P_k^{(2)}f\|_{2,\infty}\lesssim\  \|P_k^{(2)}f \|_2,$$
where $\|f\|_{2,\infty}$ means the weak $L^2$ norm of $f$,
and
\begin{equation*}
\|\mathcal{H}_{1}P_k^{(2)}f\|_{p}\lesssim\  \|P_k^{(2)}f\|_p,\ p>2.
\end{equation*}
We refer (\ref{jiajia}) for the definition of $P_k^{(2)}$.
For some convenience, in what follows, we call single annulus estimate  weak estimate.
 In the case of $\alpha\neq 1$, by using the local smoothing estimate which is derived from a decoupling inequality (see \cite{W00}) and the Sobolev embedding inequality,    Guo, Hickman, Lie and Roos \cite{GHLJ} established
$$\|\mathcal{H}_{\alpha}P_k^{(2)}f\|_{p}\lesssim\  \|P_k^{(2)}f\|_p,\ p>2.
$$
However, all the above works only devote to the weak estimate. Lately, in two papers \cite{GRSP,GRSP2},  Guo,  Roos, Seeger and   Yung gave a detailed discussion of the  $L^p$ boundedness of $\mathcal{H}_\alpha$ for $\alpha>1$. More precisely, the $L^p$ operator norm of $\mathcal{H}_\alpha$ depends on the choice of $u(x,y)$.
\vskip.1in
There are many works which focus on some special $u$ satisfying
$$u(x,y)=u(x,0).$$
It is corresponding to
 the operator $H_\alpha$ defined by
$$H_{\alpha}f (x,y):=p.v.\int_{\R} f(x-t,y-u(x)[t]^\alpha)  \frac{dt}{t},\ \alpha>0,
$$
where $u:\R\rightarrow \R$ is a measurable function. This operator is very related to  the Carleson's maximal operator (see e.g. \cite{Car66,L20}) in the sense that their $L^2$ bounds are equivalent, which can be obtained via the Plancherel theorem and the linearization process. By further developing  the methods applied in \cite{LL06} and \cite{LL10},  Bateman \cite{B13}, Bateman and Thiele \cite{BC}  considered the operator $H_1$ and
proved
$$\|H_{1}P_k^{(2)}f\|_{p}\lesssim\  \|P_k^{(2)}f\|_p,\ \ \ 1<p<\infty
$$
and
$$
\|H_{1}f\|_{p}\lesssim\  \|f\|_p,\ \ \ \frac{3}{2}<p<\infty.
$$
For $\alpha\neq 1$,  Guo, Hickman, Lie and Roos \cite{GHLJ} proved
$$\|H_{\alpha}f\|_{p}\lesssim\  \|f\|_p,\ \ \ 1<p<\infty$$
by Littelwood-Paley theory and the shifted maximal estimate (see Lemma \ref{sme}). In addition,
 they also proved
$$\|H_{1,\alpha}P_k^{(2)}f\|_{p}\le C_p \|P_k^{(2)}f\|_p,\ 1<p<\infty,$$
where $H_{1,\alpha}$ is given by
$$H_{1,\alpha}f (x,y):=p.v.\int_{\R} f(x-t,y-u(x)t-v(x)[t]^\alpha)  \frac{dt}{t},\ \alpha>0,\ \alpha\neq 2.$$
Their proof  relies  on  almost-orthogonality, stationary-phase and $TT^\star$ methods.
However, it is not easy to prove the $L^p$ bound for  $H_{1,\alpha}$.
 It is worth mentioning  that  the method in \cite{GHLJ} seems not useful    to the case $\alpha=1$ since it strongly depends on the curvature condition coming from $\alpha\neq 1$. In turn, one scarcely obtains the full range of $p$ when applying the method in \cite{B13,BC} to the case $\alpha\neq 1$.
\vskip.1in
Motivated by the above works, here we consider the operator  $H_{\alpha,\beta}$ defined by
\begin{equation}\label{n1}
H_{\alpha,\beta}f (x,y):=p.v.\int_{\R} f(x-t,y-u(x)[t]^\alpha-v(x)[t]^\beta)  \frac{dt}{t},\ \alpha,\beta>0.
\end{equation}
where $u:\R\rightarrow \R$ and $v:\R\rightarrow\R$ are two measurable functions.  The natural goal is to prove the $L^p$ boundedness of this operator.
Using the notation in (\ref{n1}), we collect the previous related works as follow:
\vskip.2in
\begin{tabular}{|l|l|l|}
  \hline
  $\alpha=\beta=1$ &  \cite{B13,BC} &  $\frac{3}{2}<p<\infty$ and weak estimate $1<p<\infty$ \\
  \hline
 $\alpha=\beta\neq1$ & \cite{GHLJ}  & $1<p<\infty$\\
  \hline
  $\alpha\neq \beta, \alpha=1, \beta\neq 2

  $  &   \cite{GHLJ} & weak estimate $1<p<\infty$  \\
  \hline
\end{tabular}
\vskip.1in
Now, we state our main result.
\begin{thm}\label{t2}
Let $(\alpha,\beta)\in S_1$ defined by
\begin{equation*}
S_1:=\big\{(\alpha,\beta)\in \R^+\times \R^+:\ \alpha\neq \beta,\ \alpha\neq 1,\ \beta\neq 1\big\},
\end{equation*}
then we have
\begin{equation}\label{aim}
\|H_{\alpha,\beta}f\|_p\lesssim_{\alpha,\beta,p} \|f\|_p
\end{equation}
holds for $1<p<\infty$.
\end{thm}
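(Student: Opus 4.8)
The plan is to follow the Littlewood--Paley-plus-maximal-function strategy of \cite{GHLJ} for the case $\alpha=\beta$, adapted to the genuinely two-scale geometry created by $\alpha\neq\beta$. First I would note that $H_{\alpha,\beta}$ preserves the $y$-frequency: after a partial Fourier transform in $y$, the output at frequency $\eta$ depends only on the input at frequency $\eta$, so $H_{\alpha,\beta}$ commutes with the projections $P_k^{(2)}$ of (\ref{jiajia}). By the Littlewood--Paley inequality in $y$ it then suffices to prove the vector-valued estimate $\|(\sum_k|H_{\alpha,\beta}P_k^{(2)}f|^2)^{1/2}\|_p\lesssim\|(\sum_k|P_k^{(2)}f|^2)^{1/2}\|_p$, and by the Fefferman--Stein inequality this reduces to a single pointwise bound, uniform in $k$, dominating $|H_{\alpha,\beta}P_k^{(2)}f|$ by one $L^p$-bounded maximal operator applied to $P_k^{(2)}f$. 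To reveal the oscillation I would decompose dyadically in $t$, $H_{\alpha,\beta}=\sum_j H_j$ with $H_j$ supported in $|t|\sim2^j$, rescale to unit scale, and write the resulting phase as $A[t]^\alpha+B[t]^\beta$ with $A\sim 2^k u(x)2^{j\alpha}$ and $B\sim 2^k v(x)2^{j\beta}$.

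The non-flatness hypotheses all enter at this point. Because $\alpha\neq\beta$, the ratio $B/A\sim 2^{j(\beta-\alpha)}v(x)/u(x)$ is strictly monotone in $j$, so for each $x$ there is a single resonant scale $j_\ast(x)$ at which $|A|\sim|B|$; this is the monotone scale separation that is unavailable when $\alpha=\beta$. I would use it to split the frequency parameter into three regimes: (i) a non-oscillatory regime $|A|+|B|\lesssim1$; (ii) a single-term regime $|j-j_\ast(x)|\gtrsim1$ with $\max(|A|,|B|)\gg1$, where one monomial controls the phase; and (iii) the resonant regime $j\approx j_\ast(x)$, $|A|\sim|B|\gg1$. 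The hypotheses $\alpha\neq1$ and $\beta\neq1$ ensure that each monomial has nonvanishing curvature, so in regime (ii) van der Corput or stationary phase produces honest decay; were either exponent equal to $1$ the dominant term would be flat and no such decay would exist, which is exactly the harder situation excluded here. Moreover $\alpha\neq\beta$ makes the critical point in regime (iii) nondegenerate: a short computation gives $\Phi''(t_0)\sim(\alpha-\beta)A\alpha t_0^{\alpha-2}\neq0$ at the critical point $t_0$ of $\Phi(t)=A[t]^\alpha+B[t]^\beta$.

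In regime (i) the phase is essentially constant on $|t|\sim2^j$, so $H_jP_k^{(2)}f$ is dominated pointwise by an ordinary maximal function and only $O(1)$ scales contribute per $k$. In regime (ii) I would extract a decay factor---$|A|^{-1/2}$ from the nonvanishing curvature, or $2^{-c|j-j_\ast(x)|}$ for some $c>0$ from non-stationary phase---that is summable in $j$, the principal term being a translate of $P_k^{(2)}f$ in $x$ that I would absorb into a shifted maximal function controlled by Lemma \ref{sme}. Regime (iii) is the delicate one: since $u,v$ are merely measurable, the nondegenerate critical point $t_0(x)$ is an arbitrary function of $x$, and stationary phase yields a main term that is a translate of $P_k^{(2)}f$ by the variable amount $\sim2^jt_0(x)$. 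To dominate this uniformly I would introduce the ``short'' shifted maximal function $\mathfrak{M}^{[n]}$, a shifted average confined to a single scale $n\sim j$, whose operator norm grows only slowly in the shift; balancing this slow growth against the $|A|^{-1/2}\sim (2^k|u(x)|2^{j\alpha})^{-1/2}$ van der Corput decay makes the double sum over $(j,k)$ converge to a single bounded maximal operator.

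The main obstacle is regime (iii). Because the data are arbitrary measurable functions, no fixed unshifted maximal function can capture the stationary contribution, and the decoupling/local-smoothing tools that gave the single-annulus bounds for $p>2$ in \cite{GHLJ} do not directly reach the full range $1<p<\infty$; real-variable shifted-maximal methods seem essential. The key is that the logarithmic loss of the plain shifted maximal operator of Lemma \ref{sme} would be fatal after summation, whereas confining both the shift and the averaging radius to a single dyadic window in $\mathfrak{M}^{[n]}$ gives a gain that, together with the curvature decay from $\alpha,\beta\neq1$ and the scale separation from $\alpha\neq\beta$, makes the sum over all scales and frequencies converge. Assembling the three regimes into one pointwise maximal bound and applying the Fefferman--Stein vector-valued inequality then yields \eqref{aim} for every $1<p<\infty$.
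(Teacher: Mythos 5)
Your overall architecture (Littlewood--Paley in $y$, dyadic decomposition in $t$, a three-way split around the resonant scale where $u(x)|t|^\alpha\sim v(x)|t|^\beta$, shifted maximal functions for the translated main terms) matches the paper in spirit, and your identification of where $\alpha\neq\beta$ and $\alpha,\beta\neq1$ enter is accurate. But the central mechanism by which you propose to sum over scales does not work as stated. You want a \emph{single pointwise} domination of $|H_jP_k^{(2)}f|$ by a positive maximal operator that simultaneously carries a van der Corput factor $|A|^{-1/2}$ or $2^{-c|j-j_\ast(x)|}$. These two requirements are incompatible: the oscillation of the phase $\eta\big(u(x)[t]^\alpha+v(x)[t]^\beta\big)$ is only visible after a partial Fourier transform in $y$, and the moment you pass to a positive averaging operator (any shifted maximal function) you have taken absolute values and destroyed it. The paper separates the two ingredients: an \emph{exponential $L^2$ decay} $2^{-\tau\max\{j\beta,m\alpha\}}$ proved via Plancherel in $y$ and a Stein--Wainger-type \emph{maximal} oscillatory integral estimate (Lemma \ref{l1.1}, needed because $u,v$ are arbitrary measurable functions, so the phase parameters and the truncation must be controlled uniformly via a supremum), and a \emph{polynomially growing $L^p$ bound} of size $(j^2\beta^2+m^2\alpha^2+C_{\alpha,\beta})^2$ obtained from the shifted maximal operators of Lemmas \ref{ssme} and \ref{sme}; interpolation between the two then produces the summable geometric decay for every $1<p<\infty$. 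Your proposal contains neither the $L^2$/Plancherel step nor the interpolation, and without them the sum over the scale parameters (the indices $m,j$ in the paper's $H^{hh}_{\alpha,\beta}$ piece) does not converge.

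Relatedly, you have the role of the short shifted maximal function $\mathfrak{M}^{[n]}$ backwards. Since $\mathfrak{M}^{[n]}f\le M^{[n]}f$, it enjoys no better norm growth than the ordinary shifted maximal operator, and the logarithmic loss of Lemma \ref{sme} is not ``fatal'': it is exactly what produces the polynomial growth that interpolation absorbs. The actual reason $\mathfrak{M}^{[n]}$ is introduced is structural: in the regime where both monomials are large, the shift of the inner maximal operator is $\sigma_r^2(x,k,i)=\tau+i+a_{j,m,k}(x)$, which depends on $k$ and on $x$ through $u(x),v(x)$; this blocks the vector-valued estimate (\ref{sme1}), which requires a shift independent of the summation index $k$. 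The paper splits the $k$-sum at $k^1_{\alpha,\beta,x}$ and $k^2_{\alpha,\beta,x}$ precisely so that on the two outer ranges the $k$-dependent part of the shift is bounded by a constant, and Lemma \ref{lnew} (a pigeonholing of the shift of a unit-length interval into a $k$-independent part plus a bounded remainder) removes it; the middle range contains only $O(j\beta+m\alpha)$ values of $k$ and is handled by the scalar estimate (\ref{ssme1}). If you pursue your route you must supply (a) an $L^2$ decay estimate with a supremum over the measurable data, to be interpolated against the maximal-function bounds, and (b) a device for making the shifts $k$-independent before invoking any vector-valued inequality.
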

\begin{rem}\label{r1}
 As far as we know, it is unsolved for the followng two cases:
$$S_2:=\big\{(\alpha,\beta)\in \R^+\times \R^+:\ \alpha\neq \beta,\ \alpha=1,\ \beta\neq 2\big\},\ S_3:=\{(1,2)\}.$$
We give the following remarks on the case $p\neq 2$ since the special case $p=2$ is a direct result of the  Carleson maximal estimate.\\
(1) In the case of $(\alpha,\beta)\in S_2$, it  seems hard to directly use
our approach, since $u(x)t$ does not have any curvature such that it is not easy to obtain  a useful estimate like Lemma \ref{l1.1}. We speculate that its proof needs a combination of the methods in previous works and our approach in the current paper. For all this, it remains open.\\
(2) For $(\alpha,\beta)\in S_3$, as the Carleson maximal operator, it is the most natural challenge. We are far from knowing how to bound this case.
\end{rem}
Next, let us make some comments on our proof.
\vskip.1in
We first show the new gap in the proof and then give our strategy.
Since there are  two fractional variable monomial
$u(x)[t]^\alpha$ and $v(x)[t]^\beta$,
we need  a different shifted maximal function $M^{[n]}$ (see section \ref{s2} for the definition) to control some pointwise estimate, where $n$ depends on the frequncy of the objective function (This fact makes our proof and the correspongding proof in  \cite{GHLJ} different).  More badly, this type of shifted maximal function  prevents us from   using  the vector-valued shifted maximal estimate. To break this barrier, we first divide the frequency of the objective function into three cases, and  use different measures to deal with these cases.
Then we introduce a ``short" shift maximal function $\mathfrak{M}^{[n]}$ to obtain some pointwise estimate.  At last,  we can prove the desired estimate by applying both the vector-valued and the scalar shifted maximal estimates.
\vskip.2in
The following  weak estimate is a direct result of Theorem \ref{t2} by taking $f\rightarrow P_k^{(2)}f$.
\begin{cor}[weak estimate]
Let $(\alpha,\beta)\in S_1$, then for all $k\in\Z$
\begin{equation*}
\|H_{\alpha,\beta}P_k^{(2)}f\|_p\lesssim_{\alpha,\beta,p} \|P_k^{(2)}f\|_p.
\end{equation*}
 holds for $1<p<\infty$.
\end{cor}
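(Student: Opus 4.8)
The plan is to read Theorem \ref{t2} as a statement about the operator norm and then specialize it, exactly as the sentence preceding the corollary indicates. The content of (\ref{aim}) is that for every $(\alpha,\beta)\in S_1$ and every $1<p<\infty$ there is a constant $C_{\alpha,\beta,p}$, \emph{independent of the input function}, with $\|H_{\alpha,\beta}g\|_p\le C_{\alpha,\beta,p}\|g\|_p$ for all $g\in L^p(\R^2)$. Since this holds for every admissible $g$, I would simply take $g=P_k^{(2)}f$. Fixing $k\in\Z$ and substituting $f\mapsto P_k^{(2)}f$ into (\ref{aim}) yields
\[
\|H_{\alpha,\beta}P_k^{(2)}f\|_p\le C_{\alpha,\beta,p}\,\|P_k^{(2)}f\|_p,
\]
which is precisely the claimed inequality.

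The only point worth stressing is the uniformity in $k$. The constant produced by Theorem \ref{t2} depends solely on $\alpha$, $\beta$, and $p$, and is insensitive to which function $H_{\alpha,\beta}$ is applied to; hence the substitution introduces no new $k$-dependence, and the bound holds with one and the same constant for all $k\in\Z$ simultaneously. No property of the Littlewood--Paley projection $P_k^{(2)}$ is needed beyond the fact that $P_k^{(2)}f\in L^p$, which merely makes the substitution legitimate.

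Accordingly, there is no genuine obstacle to overcome here: all of the difficulty is already absorbed into the proof of Theorem \ref{t2} (the three-case frequency decomposition and the short shifted maximal function $\mathfrak{M}^{[n]}$), and the corollary is a one-line consequence obtained by reading the theorem as a uniform operator bound. The only feature worth remarking on is the reversed logical direction typical of this circle of problems: single-annulus (``weak'') estimates are usually established first and then assembled into the full $L^p$ bound, whereas here the full bound is proved directly and the frequency-localized estimate falls out for free.
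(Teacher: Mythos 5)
Your proposal is correct and is exactly the paper's argument: the text introduces the corollary as ``a direct result of Theorem \ref{t2} by taking $f\rightarrow P_k^{(2)}f$,'' which is precisely the substitution you perform, with the same observation that the constant from (\ref{aim}) is independent of the input and hence uniform in $k$.
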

\vskip .1in
The present paper is structured as follows:\\
 In Section \ref{s2},  we give identity decomposition, the  Littlewood-Paley projection, and some useful estimates such as the shifted maximal estimate.  The third section proves Theorem \ref{t2}, while the followed section  gives the proof of Lemma  \ref{4.1}. Finally, we give the proof of Lemma  \ref{l2} in the last section.
 \vskip.1in

Let us complete this section by describing the notation we shall use in this paper.\\
{\bf Notation.}  We hereinafter  use $x\lesssim y$ to stand for there exists a constant $C$ ( which may only depend on fixed parameters such as $\alpha$, $\beta$ and $p$ ) such that $x\le Cy$. We write $C_{\gamma}$ to mean that the constant $C$ depends on $\gamma$.
$\mathcal{F}^y(f)$ is the Fourier transform in the $y-$variable of a function $f$. We use
$\|\cdot\|_p$ to stand for $\|\cdot\|_{L^p}$.
\vskip .3in
\section{Some preparations}
\label{s2}
\vskip .1in
Let $\phi(t)$ be a radial, smooth and decreasing function which is supported on $\{|t|\le 2\}$ and equals 1 on $\{|t|\le1\}$. Define $\psi(t) $ by
$\psi(t)=\phi(t)-\phi(2t)$, which is a non-negative smooth function supported on the set $\{t:1/2\le |t|\le 2\}$. Denote $\psi_l(t):=\psi(2^{-l}t)$, $\phi_l(t):=\phi(2^{-l}t)$ (Note that $\phi_0(t)=\phi(t)$), then for all $l_0\in\Z$,
\begin{equation}\label{decom}
\phi_{l_0}(t)+\sum_{l> l_0} \psi_l(t)=1,\ \forall\ t\in\R
\end{equation}
and
$$\sum_{l\in\Z}\psi_l(t)=1,\ \forall\ t\in\R\setminus\{0\}.$$
Define the corresponding Littlewood-Paley projection in the $y-$variable  of a function $f$ on $\R$ by
\begin{equation}\label{jiajia}
P_k^{(2)} f(x,y):=\int_{\R} f(x,y-z)\check{\psi_k}(z)dz,
\end{equation}
where $\check{\psi_k}$ denotes the inverse Fourier transform of the function $\psi_k$.
\vskip.1in
We need the following decay estimate, which is a modification of Lemma 2.1 in \cite{GPRY}.
\begin{lemma}\label{l1.1}
Let  $\alpha,\beta$ be two positive numbers satisfying $\alpha\neq \beta$, $\alpha,\beta\neq 1$, and $\psi$ be smooth and supported on $[1/2,2]\cup [-2,-1/2]$ and $\theta$ be smooth and supported on $[-2,2]$. For $\lambda=(\lambda_1,\lambda_2)\in\R^2$, $b\in \R$ and $a>0$, let
\begin{equation}\label{para}
\Phi^{\lambda,b}(t)=e^{i\lambda_1[t]^\alpha+
i\lambda_2[t]^\beta}\frac{\theta(bt)\psi(t)}{t},\ \Phi^{\lambda,b}_a(t)=\frac{1}{a}\Phi^{\lambda,b
}(\frac{t}{a}).
\end{equation}
Then there exists  $\gamma_0>0$ such that for all $r\ge 1$ and $\digamma\in L^2(\R)$,
$$\big\|\sup_{\begin{array}{c}
a>0,b\in \R \\
|\lambda|:=|\lambda_1|+|\lambda_2|\ge r
\end{array}}
|\big(\digamma\star\Phi^{\lambda,b}_a\big) (x) |\big\|_{L^2(dx)}\lesssim_{\alpha,\beta} r^{-\gamma_0}\|\digamma\|_{L^2(dx)}.$$
\end{lemma}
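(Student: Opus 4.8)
The plan is to reduce the maximal estimate to a uniform oscillatory decay bound for the Fourier multiplier of $\Phi^{\lambda,b}$, and then to upgrade this fixed-parameter bound to a bound for the full supremum by a square-function and Sobolev-in-parameters argument, paying a price that still leaves a positive power of $r$.

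First I would record that, after the substitution $t=as$, $(F\star\Phi^{\lambda,b}_a)(x)=\int F(x-as)\,\Phi^{\lambda,b}(s)\,ds$, so that $\widehat{\Phi^{\lambda,b}_a}(\xi)=\widehat{\Phi^{\lambda,b}}(a\xi)$ and the whole family is built from the single kernel $\Phi^{\lambda,b}$, supported in $|s|\sim1$ and $L^1$-normalized uniformly in $(\lambda,b)$. The analytic heart is the estimate
$$\big|\widehat{\Phi^{\lambda,b}}(\eta)\big|\lesssim_{\alpha,\beta}\min\{1,|\lambda|^{-\gamma_1}\}\quad(\text{all }\eta),\qquad \big|\widehat{\Phi^{\lambda,b}}(\eta)\big|\lesssim_N|\eta|^{-N}\ \ (|\eta|\ge C|\lambda|),$$
for some $\gamma_1>0$, uniformly in $b$. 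To prove the first bound I would apply van der Corput to the phase $\Psi(s)=-\eta s+\lambda_1[s]^\alpha+\lambda_2[s]^\beta$ on each of the two arcs $s\sim\pm1$. The point is the nondegeneracy forced by the hypotheses: since $\alpha,\beta\neq1$ the coefficients $\alpha(\alpha-1),\beta(\beta-1)$ are nonzero, and since $\alpha\neq\beta$ the functions $s^{\alpha-2}$ and $s^{\beta-2}$ are linearly independent on $[1/2,2]$; a short compactness argument then yields $|\Psi''(s)|+|\Psi'''(s)|\gtrsim|\lambda|$ throughout $s\sim1$ (indeed, at any zero of $\Psi''$ one computes $\Psi'''\propto(\alpha-\beta)\lambda_1 s^{\alpha-3}\neq0$, and $\Psi''$ has at most one zero). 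Combining a second-derivative van der Corput estimate away from that zero with the trivial sublevel-set bound near it gives $|\widehat{\Phi^{\lambda,b}}(\eta)|\lesssim|\lambda|^{-1/3}$; the amplitude $\theta(bs)\psi(s)/s$ has uniformly bounded variation because $\theta(bs)\neq0$ forces $|b|\lesssim1$ on the support. The second bound is non-stationary phase: $|\eta|\ge C|\lambda|$ makes $|\Psi'|\gtrsim|\eta|$, and $N$ integrations by parts finish it.

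For the supremum I would split dyadically, $\sup_{|\lambda|\ge r}=\sup_{2^j\ge r}\sup_{|\lambda|\sim2^j}$, and use $\sup_j c_j\le(\sum_j c_j^2)^{1/2}$ to reduce matters to a single-annulus bound $\|\sup_{a,b,|\lambda|\sim L}|F\star\Phi^{\lambda,b}_a|\|_2\lesssim L^{-\gamma_0'}\|F\|_2$ for $L=2^j\ge r$, which then sums to $r^{-\gamma_0}$. On a fixed annulus the parameters $(\lambda/|\lambda|,b)$ range over a compact set, so I would control the sup over them by Sobolev embedding and the sup over the dilation $a$ by the standard inequality $\sup_a|g(a)|^2\lesssim(\int|g|^2\tfrac{da}{a})^{1/2}(\int|a\p_a g|^2\tfrac{da}{a})^{1/2}$. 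This turns the problem into $L^2$ bounds for the square functions $(\int|F\star\Phi^{\lambda,b}_a|^2\tfrac{da}{a})^{1/2}$ and $(\int|a\p_a(F\star\Phi^{\lambda,b}_a)|^2\tfrac{da}{a})^{1/2}$, together with their $(\lambda,b)$-derivatives; by Plancherel these reduce to the multiplier decay above, where the vanishing of the symbol $\eta\p_\eta\widehat{\Phi^{\lambda,b}}(\eta)$ at $\eta=0$ cures the low-frequency divergence of $\int\tfrac{da}{a}$.

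The main obstacle is retaining a genuine power gain after differentiating in the parameters: each $\p_{\lambda}$ or $a\p_a$ brings down a factor $\sim|\lambda|\sim L$ from the phase, which by itself would swamp the decay $L^{-\gamma_1}$. The balance is restored by combining the oscillatory gain on $|\eta|\lesssim|\lambda|$ with the rapid off-diagonal decay on $|\eta|\gtrsim|\lambda|$, and by interpolating against the trivial Hardy--Littlewood bound $\sup_a|F\star\Phi^{\lambda,b}_a|\lesssim MF$ (which controls the sup with no gain); choosing the interpolation exponent optimally produces some $0<\gamma_0<\gamma_1$, which is all that is needed after the summation in $j$.
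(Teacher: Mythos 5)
Your fixed-parameter oscillatory analysis is sound and is indeed the correct local input: since $\Psi''$ does not involve $\eta$, the identity $s\Psi'''=(\alpha-2)\Psi''+(\beta-\alpha)\lambda_2\beta(\beta-1)[s]^{\beta-2}$ shows that $|\Psi''|+|\Psi'''|\gtrsim|\lambda|$ on $|s|\sim 1$ precisely because $\alpha\neq\beta$ and $\alpha,\beta\neq 1$, and the reduction to $|b|\le 4$ is exactly the paper's (only explicit) observation. But be aware that the paper does not argue this way at all: it simply notes the compact range of $b$ and invokes Lemma 2.1 of \cite{GPRY} ``line by line'', whose proof is the Stein--Wainger scheme \cite{SW01} --- linearize $a(x)$, $b(x)$, $\lambda(x)$, and bound the resulting operator by $TT^{\star}$, applying van der Corput to the $z$-integral in the kernel of $TT^{\star}$ (whose phase is a difference of two phases with different parameters) together with a measure estimate of the set where its derivatives degenerate.

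The genuine gap is in your passage from the fixed-parameter decay to the supremum. After rescaling $\lambda=L\nu$ with $|\nu|\sim 1$, each derivative $\partial_\nu$ brings down $iL[s]^{\alpha}$ or $iL[s]^{\beta}$, i.e.\ a full factor $L$, and likewise $a\partial_a$ acting on $\Phi^{\lambda,b}_a$ produces the amplitude $i(\alpha\lambda_1[s]^\alpha+\beta\lambda_2[s]^\beta)\sim L$; van der Corput only returns $L^{-1/3}$ regardless of the (harmless) amplitude. Hence the Cauchy--Schwarz bound $\sup_a|g|^2\le 2(\int|g|^2\frac{da}{a})^{1/2}(\int|a\partial_ag|^2\frac{da}{a})^{1/2}$ yields, via Plancherel, a constant of order $(L^{-2/3}\cdot L^{4/3})^{1/2}=L^{1/3}$, i.e.\ a \emph{loss} of $L^{1/6}$, and Sobolev embedding over the three compact parameters $(\nu,b)$ is worse still (order $L^{2-1/3}$). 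Your proposed repair --- interpolating against the Hardy--Littlewood bound $\sup_a|F\star\Phi^{\lambda,b}_a|\lesssim MF$ --- cannot work: interpolating a bound that grows in $L$ with one that is $O(1)$ at the same Lebesgue exponent gives at best the minimum of the two, which is $O(1)$, never a negative power of $L$; no choice of interpolation exponent produces decay from two non-decaying estimates. (A secondary, fixable issue: $\int|F\star\Phi^{\lambda,b}_a|^2\frac{da}{a}$ diverges as $a\to 0$ unless you first subtract $\widehat{\Phi^{\lambda,b}}(0)$ times a fixed mean-one bump, whose contribution is $O(|\lambda|^{-1/3})MF$; you only address the analogous point for the $a\partial_a$ term.) To close the argument you must replace the Sobolev-in-parameters step by the linearization-plus-$TT^{\star}$ argument of \cite{SW01,GPRY}, into which your nondegeneracy computation feeds.
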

\begin{proof}
Due to the supports of $\theta$ and $\psi$,
the effective range for $b$ is  $\{|b|\le 4\}$. Then we can replace $b\in \R$ by $|b|\le 4$. Following the proof of Lemma 2.1 in \cite{GPRY} line by line can lead to the desired estimate.
\end{proof}
Suppose $\sigma\ge 0$, we define the shifted maximal operator $M^{[\sigma]}$ and ``short" shifted maximal operator $\mathfrak{M}^{[\sigma]}$ as follows:
$$M^{[\sigma]}f (z):=\sup_{z\in I\subset \R}\frac{1}{|I|}
\int_{I^{(\sigma)}}|f(\xi)|d\xi$$
and
$$\mathfrak{M}^{[\sigma]}f (z):=\sup_{z\in I\subset \R,|I|=1}
\int_{I^{(\sigma)}}|f(\xi)|d\xi,$$
where $I^{(\sigma)}$ denotes a shift of the bounded interval $I=[a,b]$ given by
$$I^{(\sigma)}:=[a-\sigma|I|,b-\sigma|I|]\cup [a+\sigma|I|,b+\sigma|I|].$$
Obviously, $\mathfrak{M}^{[\sigma]}f (z)\le\ M^{[\sigma]}f (z)$.
\vskip.1in
Our proof needs the following estimates.
\begin{lemma}\cite{M14,S93}\label{ssme}
Let $1<p<\infty$,
we have
\begin{equation}\label{ssme1}
\|M^{[n]}f\|_p
\lesssim\ \log (2+|n|) \|f\|_p.
\end{equation}
Here the constant hidden in $``\lesssim"$ is independent of $|n|$ and $f$.
\end{lemma}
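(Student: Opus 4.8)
The plan is to deduce \eqref{ssme1} for every $1<p<\infty$ by interpolating between a trivial $L^\infty$ estimate and a weak-type $(1,1)$ estimate that carries the logarithmic factor. The $L^\infty$ bound is immediate and uniform in $n$: for any interval $I\ni z$ one has $|I^{(n)}|=2|I|$, hence $\frac{1}{|I|}\int_{I^{(n)}}|f|\le 2\|f\|_\infty$ and so $\|M^{[n]}f\|_\infty\le 2\|f\|_\infty$. Thus the entire content is to prove
$$\big|\{z:\ M^{[n]}f(z)>\lambda\}\big|\ \lesssim\ \frac{\log(2+|n|)}{\lambda}\,\|f\|_1,\qquad \lambda>0,$$
after which the Marcinkiewicz interpolation theorem yields $\|M^{[n]}\|_{L^p\to L^p}\lesssim_p \log(2+|n|)^{1/p}\lesssim \log(2+|n|)$, the $L^\infty$ endpoint constant being harmless.

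For the weak-type estimate I would run a Vitali-type covering argument adapted to the shift. Fix $\lambda>0$ and $f\ge 0$. To each $z$ in the level set attach an interval $I_z\ni z$ with $\frac{1}{|I_z|}\int_{I_z^{(n)}}f>\lambda$; since $I_z^{(n)}$ is the union of the two translates of $I_z$ by $\pm n|I_z|$, at least one of them, call it $J_z$, has length $|I_z|$, lies at distance comparable to $n|I_z|$ from $z$, and satisfies $\frac{1}{|J_z|}\int_{J_z}f>\lambda/2$. Covering the level set by the $\{I_z\}$ and then summing $\sum|I_z|\le \tfrac{2}{\lambda}\sum\int_{J_z}f$ over a suitable subfamily reduces everything to an overlap bound for the selected translates $\{J_z\}$ tested against $f$. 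The source of the factor $\log(2+|n|)$ is that, once the $J_z$ are organized by dyadic scale, a fixed point of $\R$ can belong to the selected translates issuing from only $O(\log(2+|n|))$ distinct scales; this is precisely the overlap computation carried out in \cite{M14,S93}, which I would import.

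The step I expect to be the real obstacle is exactly this logarithmic control of the overlap. A crude selection that ignores the scale structure only bounds the number of contributing scales by a power of $n$, and hence yields a polynomial rather than a logarithmic loss; upgrading this to $\log(2+|n|)$ forces one to use the lacunary separation of the shifted intervals across scales, and it is here that the quantitative heart of the argument lies. An alternative that sidesteps the sharp covering lemma is to first establish $\|M^{[n]}\|_{L^2\to L^2}\lesssim \log(2+|n|)$ through an almost-orthogonality estimate across scales — each single-scale shifted average being a convolution with an $L^1$-normalised kernel, hence uniformly bounded on $L^2$ — and then to combine this with the $L^\infty$ bound by interpolation for $p>2$ and by duality for $1<p<2$; but tracking the logarithm through the resulting square function is no less delicate, so in either route I would ultimately defer the quantitative overlap estimate to \cite{M14,S93}.
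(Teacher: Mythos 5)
The paper itself offers no proof of this lemma: it is imported verbatim from \cite{M14,S93}, so there is no internal argument to compare yours against, and your proposal is really an attempt to reconstruct the cited one. Your skeleton --- the trivial bound $\|M^{[n]}f\|_\infty\le 2\|f\|_\infty$, a weak-type $(1,1)$ inequality carrying the logarithm, and Marcinkiewicz interpolation --- is indeed the standard architecture for this result, and you are right that the entire content sits in the overlap/scale-counting estimate that you ultimately defer back to \cite{M14,S93}.

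The genuine gap is that the weak-type $(1,1)$ inequality you build everything on, $|\{M^{[n]}f>\lambda\}|\lesssim\lambda^{-1}\log(2+|n|)\|f\|_1$, is false for $M^{[n]}$ as defined in Section \ref{s2}, where the supremum runs over \emph{all} intervals $I\ni z$. Take $f=\chi_{[0,1]}$ and $1<z<|n|$: choosing $I\ni z$ with $|I|$ in the nonempty window $[\tfrac{z-1}{|n|-1},\tfrac{z}{|n|}]$ places one translate of $I$ by $\mp |n||I|$ entirely inside $[0,1]$, so $M^{[n]}f(z)\ge 1$ on a set of measure $\gtrsim |n|$; hence the weak $(1,1)$ constant is at least $c|n|$ and $\|M^{[n]}\|_{L^p\to L^p}\ge c|n|^{1/p}$, which no interpolation can convert into $\log(2+|n|)$. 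The logarithmic bound is a theorem about the \emph{dyadic} (more generally, lacunary-in-scale) shifted maximal function, which is what \cite{M14,S93} actually treat: there the shift cannot be tuned continuously, and the correct overlap count is not the one you state (``a fixed point belongs to translates from only $O(\log(2+|n|))$ scales'' --- a point can lie in translates coming from arbitrarily many scales) but rather: for each component $\omega$ of $\{Mf>\lambda/2\}$, only the $O(\log(2+|n|))$ dyadic scales $2^{-k}$ with $|\omega|/|n|\lesssim 2^{-k}\le|\omega|$ produce preimage intervals $I$ not already contained in a bounded dilate of $\omega$, all smaller scales being absorbed into $3\omega$. This is arguably an imprecision inherited from how the lemma is transcribed here, but your proof targets the operator as defined, and for it the key step fails: to make the plan rigorous you must first restrict the supremum to dyadic scales, and that restriction is not cosmetic, since the continuous and dyadic variants genuinely have different operator norms.
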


\begin{lemma}\cite{GHLJ}\label{sme}
Let $1<p<\infty$, $1<q\le \infty$,
we have
\begin{equation}\label{sme1}
\Big\|\big(\sum_{k\in\Z}|M^{[n]}f_k|^q\big)^\frac{1}{q}\Big\|_p
\lesssim\ \log^2(2+|n|) \Big\|\big(\sum_{k\in\Z}|f_k|^q\big)^\frac{1}{q}\Big\|_p.
\end{equation}
Here the constant hidden in $``\lesssim"$ is independent of $|n|$ and $f$.
\end{lemma}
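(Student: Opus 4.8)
The plan is to obtain the vector-valued bound \eqref{sme1} from the scalar shifted maximal estimate \eqref{ssme1} by the classical weighted/duality machinery, keeping careful track of how the shift produces logarithmic losses. Denote by $N_{p,q}(n)$ the best constant in \eqref{sme1}; the target is $N_{p,q}(n)\lesssim \log^2(2+|n|)$, uniformly in $\{f_k\}_{k\in\Z}$.

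I would first dispose of two easy values of $q$. For $q=\infty$ the pointwise domination $M^{[n]}f_k(z)\le M^{[n]}\big(\sup_j|f_j|\big)(z)$ reduces the claim to \eqref{ssme1} applied to $\sup_j|f_j|$, so $N_{p,\infty}(n)\lesssim\log(2+|n|)$. For $q=p$, Fubini gives $\big\|(\sum_k|M^{[n]}f_k|^p)^{1/p}\big\|_p^p=\sum_k\|M^{[n]}f_k\|_p^p$, whence \eqref{ssme1} yields $N_{p,p}(n)\lesssim\log(2+|n|)$. Interpolating the spaces $L^p(\ell^q)$ in the $q$-index between $q=p$ and $q=\infty$ (at fixed $p$, after linearizing $M^{[n]}$ by a choice of intervals) then gives $N_{p,q}(n)\lesssim\log(2+|n|)$ for all $p\le q\le\infty$, so only the range $1<q<p$ remains.

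For $1<q<p$ set $r=p/q>1$ and dualize: $\big\|(\sum_k|M^{[n]}f_k|^q)^{1/q}\big\|_p^q=\sup\big\{\int_\R(\sum_k(M^{[n]}f_k)^q)\,g\,dz:\ g\ge0,\ \|g\|_{r'}\le1\big\}$. The heart of the matter is a shifted Fefferman--Stein inequality of the form $\int_\R (M^{[n]}h)^q\,g\,dz\lesssim \log(2+|n|)\int_\R|h|^q\,M^{[Cn]}g\,dz$, valid for $q>1$ and an absolute constant $C$. Granting it and summing in $k$, H\"older in the pair $(r,r')$ bounds the dual expression by $\log(2+|n|)\,\big\|\sum_k|f_k|^q\big\|_r\,\|M^{[Cn]}g\|_{r'}$, and a final application of \eqref{ssme1} to the weight gives $\|M^{[Cn]}g\|_{r'}\lesssim\log(2+|n|)\|g\|_{r'}\le\log(2+|n|)$. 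Collecting the two logarithmic factors and taking $q$-th roots produces $N_{p,q}(n)\lesssim(\log(2+|n|))^{2/q}$, hence at most $\log^2(2+|n|)$, which is precisely where the exponent $2$ in \eqref{sme1} originates (the worst case being $q\to1^+$).

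The main obstacle is the shifted Fefferman--Stein inequality itself, and in particular the appearance of its own logarithmic factor. Unlike the unshifted case, one does \emph{not} have the pointwise bound $g(z)\le M^{[\sigma]}g(z)$, since $M^{[\sigma]}$ only averages $g$ over the displaced sets $I^{(\sigma)}$, which stay at distance $\approx\sigma|I|$ from $z$; consequently the standard $A_1$-argument transferring the weight does not apply verbatim. I would instead pass to the dyadic model, dominating $M^{[n]}$ by $O(1)$ shifted dyadic maximal operators over translated grids, linearize by selecting for each $z$ a dyadic interval $Q_z\ni z$, and then connect $z$ to the displaced interval $Q_z^{(n)}$ through the $O(\log(2+|n|))$ intermediate dyadic scales separating them; summing a stopping-time/telescoping estimate over these scales is exactly what transfers the shift onto the weight as $M^{[Cn]}g$ and accounts for the single logarithmic loss in the displayed inequality. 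Combined with \eqref{ssme1} on the weight, these losses multiply to the claimed $\log^2(2+|n|)$.
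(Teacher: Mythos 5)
The paper does not actually prove this lemma; it is quoted verbatim from \cite{GHLJ}, so there is no internal argument to compare yours against --- you are supplying a proof where the paper has none. Your overall architecture is the natural one: the cases $q=\infty$ and $q=p$ reduce immediately to the scalar bound \eqref{ssme1}, interpolation covers $p\le q\le\infty$, and the range $1<q<p$ is handled by duality against a weight. Those easy steps are fine, and the bookkeeping that produces the exponent $2$ is coherent.

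The problem is that the entire difficulty of the lemma has been pushed into the ``shifted Fefferman--Stein inequality'' $\int (M^{[n]}h)^q g \lesssim \log(2+|n|)\int |h|^q M^{[Cn]}g$, which you do not prove, and the sketch you offer for it begins with a step that is false. You propose ``dominating $M^{[n]}$ by $O(1)$ shifted dyadic maximal operators over translated grids.'' Unlike the unshifted case, this reduction is not available: if you replace an arbitrary $I\ni z$ by a dyadic $J\supset I$ with $|J|\le 4|I|$, the displaced interval moves from $I\pm n|I|$ to $J\pm n'|J|$, and matching the displacement forces $n'\approx n|I|/|J|$ to range over $\sim n$ integer values as $|I|$ varies continuously; so the continuous shifted operator is dominated only by $\sim n$ dyadic ones, not $O(1)$. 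This is not a technicality. With the paper's definition (supremum over \emph{all} intervals $I\ni z$), even the scalar estimate \eqref{ssme1} you are building on fails: for $f=\chi_{[0,1]}$ and any $z\in(0,n]$, take $I=[z,z+z/n]$, so that $I-n|I|=[0,z/n]\subset[0,1]$ and $M^{[n]}f(z)\ge 1$; hence $\|M^{[n]}\chi_{[0,1]}\|_p\gtrsim (1+n)^{1/p}$, and no weighted inequality with only a logarithmic loss can hold for this operator. The logarithmic bounds in \cite{M14,S93,GHLJ} are for the lattice (dyadic) variant, where for each scale a point lies in boundedly many admissible intervals and the exceptional set in the example above has measure $\sim\log n$ rather than $\sim n$. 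To make your argument work you must (i) set up both the scalar input and the weighted inequality in the dyadic framework from the outset, and then separately justify that the application in Section 3 only ever needs the dyadic variant; and (ii) actually prove the weighted inequality there --- the telescoping-over-scales idea is the right instinct, but as written it is an announcement, and it is exactly the point where the whole lemma lives.
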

We also require the following lemma to prove a pointwise estimate.
\begin{lemma}\label{lnew}
Let $n=n_1+n_2$, $n_1\ge 0$ and $0\le n_2\le C_1$ for some positive constant $C_1$, then
the following pointwise estimate of $\mathfrak{M}^{[n]}$ holds:
$$\mathfrak{M}^{[n]}f\le\ \sum_{l=0}^{C_1}\mathfrak{M}^{[n_1+l]}f.$$
\end{lemma}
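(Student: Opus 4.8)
The plan is to prove the bound as a purely geometric set-covering inequality, exploiting the defining feature of the \emph{short} maximal operator: since $\mathfrak{M}^{[\sigma]}$ only averages over intervals of length exactly one, the shifted set $I^{(\sigma)}$ attached to such an $I$ is a fixed pair of unit intervals translated by $\pm\sigma$, with the translation amount not depending on the scale of $I$. This rigidity is precisely what allows a \emph{finite} sum of the operators $\mathfrak{M}^{[n_1+l]}$ to recapture $\mathfrak{M}^{[n]}$. First I would fix a point $z$ and an admissible interval $I=[a,a+1]\ni z$, and reduce the whole estimate to the single containment
$$I^{(n)}\subset \bigcup_{l=0}^{C_1} I^{(n_1+l)}.$$

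To verify this I would treat the two translated components of $I^{(n)}$ separately. With $|I|=1$ the definitions give $I^{(n)}=[a-n,a+1-n]\cup[a+n,a+1+n]$ and $I^{(n_1+l)}=[a-n_1-l,a+1-n_1-l]\cup[a+n_1+l,a+1+n_1+l]$. For the positively shifted component, the intervals $[a+n_1+l,a+1+n_1+l]$ with $l=0,1,\dots,C_1$ are consecutive unit intervals whose right endpoint for index $l$ coincides with the left endpoint for index $l+1$, so their union is the single interval $[a+n_1,a+1+n_1+C_1]$. Since $n=n_1+n_2$ with $0\le n_2\le C_1$, the component $[a+n_1+n_2,a+1+n_1+n_2]$ has left endpoint at least $a+n_1$ and right endpoint at most $a+1+n_1+C_1$, hence lies inside that union. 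The negatively shifted component is handled symmetrically, using $0\le n_2\le C_1$ to fit $[a-n_1-n_2,a+1-n_1-n_2]$ inside $[a-n_1-C_1,a+1-n_1]=\bigcup_{l=0}^{C_1}[a-n_1-l,a+1-n_1-l]$. This establishes the containment.

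With the containment in hand the rest is routine. By subadditivity of the integral of $|f|$ over the covering sets,
$$\int_{I^{(n)}}|f(\xi)|\,d\xi\le \sum_{l=0}^{C_1}\int_{I^{(n_1+l)}}|f(\xi)|\,d\xi,$$
and for each $l$ the same interval $I$ is an admissible unit interval containing $z$ in the definition of $\mathfrak{M}^{[n_1+l]}$, so the $l$-th term on the right is at most $\mathfrak{M}^{[n_1+l]}f(z)$. Taking the supremum over all admissible $I$ on the left then yields $\mathfrak{M}^{[n]}f(z)\le\sum_{l=0}^{C_1}\mathfrak{M}^{[n_1+l]}f(z)$, which is the claim.

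I do not expect a serious obstacle: the only content is the covering inclusion, and everything else is bookkeeping. The feature making the geometry transparent is that for $\mathfrak{M}^{[\sigma]}$ all admissible intervals have length one, so the translation amounts $n_1+l$ are fixed and the translated copies tile consecutively (meeting only at endpoints). The single minor caveat is that if $C_1$ is not a positive integer one should replace it by $\lceil C_1\rceil$; since this merely enlarges the index set of the covering and appends nonnegative terms to the right-hand side, the stated inequality follows unchanged.
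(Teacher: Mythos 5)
Your proposal is correct and follows essentially the same route as the paper: the paper also fixes $I=[a,a+1]$, splits $I^{(n)}$ into its two translated unit components, covers each by the consecutive unit intervals $[a\pm(n_1+l),a+1\pm(n_1+l)]$ for $l=0,\dots,C_1$ using $0\le n_2\le C_1$, sums the integrals, and takes the supremum over $a$. Your explicit statement of the covering inclusion and the remark about replacing $C_1$ by $\lceil C_1\rceil$ are minor elaborations of what the paper leaves implicit.
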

\begin{proof}
Let $I=[a,a+1]$, then
$$
\begin{aligned}
\int_{I^{(n)}}|f(\xi)|d\xi
=&\int_{a+n}^{a+1+n}|f(\xi)|d\xi+\int_{a-n}^{a+1-n}|f(\xi)|d\xi\\
=&\int_{a+n_1+n_2}^{a+1+n_1+n_2}|f(\xi)|d\xi
+\int_{a-n_1-n_2}^{a+1-n_1-n_2}|f(\xi)|d\xi.
\end{aligned}
$$
Since $0\le n_2\le C_1$, we have
$$\int_{a+n_1+n_2}^{a+1+n_1+n_2}|f(\xi)|d\xi
\le\ \sum_{l=0}^{C_1}\int_{a+n_1+l}^{a+1+n_1+l}|f(\xi)|d\xi$$
and
$$
\int_{a-n_1-n_2}^{a+1-n_1-n_2}|f(\xi)|d\xi
\le \sum_{l=0}^{C_1}\int_{a-n_1-l}^{a+1-n_1-l}|f(\xi)|d\xi.
$$
Combining with these inequalities yields
$$\int_{I^{(n)}}|f(\xi)|d\xi
\le\ \sum_{l=0}^{C_1}\int_{I^{(n_1+l)}}|f(\xi)|d\xi,$$
which completes the proof by  taking the supremum over $a\in\R$ on the both sides.
\end{proof}
\vskip .3in
\section{Proof of Theorem \ref{t2}}
\label{s4}
\vskip .1in
In this section, we prove Theorem \ref{t2}. Without loss of generality, we assume $u(x)>0$, $v(x)>0$ and $0<\alpha<\beta$. For convenience of notation, we denote $u(x)[t]^\alpha+v(x)[t]^\beta$ by $\Gamma(x,t)$.
\vskip.1in
Thanks to
$$P_k^{(2)}H_{\alpha,\beta}f=H_{\alpha,\beta}P_k^{(2)}f$$
and the Littlewood-Paley theory, it is enough to show
\begin{equation}\label{4.1}
\Big\|\big(\sum_{k\in\Z}|H_{\alpha,\beta}P_k^{(2)}f
|^2\big)^\frac{1}{2}\Big\|_p\lesssim\ \|f\|_{p}.
\end{equation}
\vskip.1in
We beforehand give some motivations of the strategy.  We first use the identity decomposition like (\ref{decom}) to quantify $\Gamma(x,t)$  since $H_{\alpha,\beta}$ is similar to the classical Hilbert transform for ``small" $|\Gamma(x,t)|$. After this process,  we  establish  a useful decay estimate like $2^{-\iota n}$ ($\iota>0$)
for $|\Gamma(x,t)|\thickapprox 2^n$, and then obtain the desired estimate for not ``small" $|\Gamma(x,t)|$ by summing over $n\ge 0$.
The above approach is analogous  to that in \cite{GHLJ},
however, due to the appearance of  $u(x)[t]^\alpha$ and $v(x)[t]^\beta$, our proof is more involved. On the one hand, on account of $\alpha\neq \beta$, we need to employ
\begin{equation}\label{d2}
\phi_{-k/\alpha}(u(x)^\frac{1}{\alpha})+\sum_{l> -k/\alpha} \psi_l(u(x)^\frac{1}{\alpha})=1
\end{equation}
and
\begin{equation}\label{d3}
\phi_{-k/\beta}(v(x)^\frac{1}{\beta})+\sum_{l> -k/\beta} \psi_l(v(x)^\frac{1}{\beta})=1
\end{equation}
to
quantify $u(x)[t]^\alpha$ and $v(x)[t]^\beta$, respectively.
On the other hand, to bound some operators by shifted maximal operator,  we have to classify the range of $k$ in (\ref{4.1})  into several small ranges, and use several different measures to control them.
 \vskip.1in
 We return to the proof.
Applying (\ref{d2}) and (\ref{d3}) to $H_{\alpha,\beta}P_k^{(2)}f$ gives
$$
\begin{aligned}
H_{\alpha,\beta}P_k^{(2)}f(x,y)=&
H_{\alpha,\beta}^{ll}P_k^{(2)}f(x,y)+
H_{\alpha,\beta}^{lh}P_k^{(2)}f(x,y)\\
&+H_{
\alpha,\beta}^{hl}P_k^{(2)}f(x,y)
+H_{\alpha,\beta}^{hh}P_k^{(2)}f(x,y),
\end{aligned}
$$
where
$$
H_{\alpha,\beta}^{ll}P_k^{(2)}f(x,y)
=p.v.\int_\R (P_k^{(2)}f)(x-t,y-\Gamma(x,t))
\phi_{-k/\alpha}(u(x)^\frac{1}{\alpha}t)
\phi_{-k/\beta}(v(x)^\frac{1}{\beta}t)\frac{dt}{t},\\
$$

$$
H_{\alpha,\beta}^{lh}P_k^{(2)}f(x,y)
=\sum_{l> -k/\beta}\int_\R (P_k^{(2)}f)(x-t,y-\Gamma(x,t))
\phi_{-k/\alpha}(u(x)^\frac{1}{\alpha}t)
\psi_{l}(v(x)^\frac{1}{\beta}t)\frac{dt}{t},\\
$$

$$
H_{\alpha,\beta}^{hl}P_k^{(2)}f(x,y)
=\sum_{l> -k/\alpha}\int_\R (P_k^{(2)}f)(x-t,y-\Gamma(x,t))
\psi_{l}(u(x)^\frac{1}{\alpha}t)
\phi_{-k/\beta}(v(x)^\frac{1}{\beta}t)\frac{dt}{t},\\
$$

$$
\begin{aligned}
H_{\alpha,\beta}^{hh}P_k^{(2)}f(x,y)
=&\sum_{\begin{array}{c}
          l> -k/\beta, \\
          m> -k/\alpha
        \end{array}
}\int_\R (P_k^{(2)}f)(x-t,y-\Gamma(x,t))
\psi_{m}(u(x)^\frac{1}{\alpha}t)
\psi_{l}(v(x)^\frac{1}{\beta}t)\frac{dt}{t}.
\end{aligned}
$$
 In order to prove (\ref{4.1}), it suffices to
 show (\ref{4.1}) with
$H_{\alpha,\beta}$ replaced by $H_{\alpha,\beta}^{ll}$,
$H_{\alpha,\beta}^{lh}$, $H_{\alpha,\beta}^{h l}$ and
$H_{\alpha,\beta}^{hh}$, respectively. We stress that the new ingredient in our proof is the estimate of $H_{\alpha,\beta}^{hh}P_k^{(2)}f$.
\subsection{The estimate of $H_{\alpha,\beta}^{ll}P_k^{(2)}f$}
We need the following lemma, the proof of which is given in Section \ref{s5}.
\vskip.1in
 Let $\mathcal{M}^{(i)}f(x,y) (i=1,2)$ be the strong maximal function applied in the $i$-th variable, $H^\star$ be the maximally truncated Hilbert transform applied in the first variable.
\begin{lemma}\label{l4.1}
The following pointwise estimate holds
\begin{equation}\label{low}
\begin{aligned}
H_{\alpha,\beta}^{ll}P_k^{(2)}f(x,y)
\lesssim&\ \sum_{q\in\Z}\frac{1}{(1+|q|)^2}
\int_q^{q+1} \mathcal{M}^{(1)}P_k^{(2)}f(x,y-z)  dz\\
&+\mathcal{M}^{(1)}P_k^{(2)}f(x,y)+H^\star P_k^{(2)}f (x,y),
\end{aligned}
\end{equation}
where the constant is independent of $u(x)$, $v(x)$, $k$ and $f$.
\end{lemma}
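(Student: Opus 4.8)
The plan is to regard $H_{\alpha,\beta}^{ll}P_k^{(2)}f$ as a frequency-localized perturbation of a flat truncated Hilbert transform acting in the first variable. Abbreviate $g:=P_k^{(2)}f$ and write the product cutoff as $\chi_x(t):=\phi_{-k/\alpha}(u(x)^{1/\alpha}t)\,\phi_{-k/\beta}(v(x)^{1/\beta}t)$. Since $\phi$ is supported in $\{|\cdot|\le 2\}$, the factor $\chi_x$ restricts $t$ to $|t|\le T(x)$ with $T(x)\approx\min\{(2^k u(x))^{-1/\alpha},(2^k v(x))^{-1/\beta}\}$, and there one has $u(x)|t|^\alpha\lesssim 2^{-k}$ and $v(x)|t|^\beta\lesssim 2^{-k}$, hence $0\le\Gamma(x,t)\lesssim 2^{-k}$. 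As $g$ oscillates in $y$ at frequency $\sim 2^k$, the vertical displacement $\Gamma(x,t)$ amounts to at most one wavelength; this is the source of all the gains. I split
\[
g(x-t,y-\Gamma(x,t))=g(x-t,y)+\big[g(x-t,y-\Gamma(x,t))-g(x-t,y)\big]
\]
and estimate the two resulting operators separately.

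The first operator is the principal-value integral $\mathrm{p.v.}\!\int_\R g(x-t,y)\,\chi_x(t)\,\frac{dt}{t}$, which for fixed $x$ is a smooth even truncation of the one-dimensional Hilbert transform in the first variable. Writing $\chi_x=\mathbf 1_{\{|t|\le T(x)\}}+(\chi_x-\mathbf 1_{\{|t|\le T(x)\}})$, the sharply truncated piece is dominated by $H^\star g(x,y)$, while the remaining kernel is $O(T(x)^{-1})$ and supported in $\{|t|\le 2T(x)\}$, so it is an $L^1$-normalized bump and contributes $\lesssim\mathcal{M}^{(1)}g(x,y)$. This already accounts for the last two terms in \eqref{low}.

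The essential term is the error $E:=\int_\R[g(x-t,y-\Gamma(x,t))-g(x-t,y)]\chi_x(t)\,\frac{dt}{t}$; because $\Gamma(x,t)\sim|t|^\alpha$ near $t=0$, the integrand is absolutely integrable and no principal value is needed. Here I would exploit the band-limitedness of $g$ in $y$: choosing $\tilde\psi$ with $\tilde\psi\psi=\psi$, letting $K_k$ be the inverse Fourier transform in $y$ of $\tilde\psi_k$ (so $K_k(z)=2^kK(2^kz)$ with $K$ Schwartz), and using $g(x-t,w)=\int g(x-t,w-z)K_k(z)\,dz$, the bracket becomes $\int g(x-t,y-z)[K_k(z-\Gamma(x,t))-K_k(z)]\,dz$. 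The mean value theorem and the decay of $K$ give $|K_k(z-\Gamma)-K_k(z)|\lesssim_N 2^{2k}\Gamma(x,t)(1+2^k|z|)^{-N}$, and the elementary bound $\int_\R\frac{\Gamma(x,t)}{|t|}\chi_x(t)\,dt\lesssim u(x)T(x)^\alpha+v(x)T(x)^\beta\lesssim 2^{-k}$ absorbs one factor $2^k$; a dyadic splitting of the $t$-integral then replaces the first-variable average by $\mathcal{M}^{(1)}g(x,y-z)$. Decomposing the remaining $z$-integral over the unit blocks $z\in[q,q+1]$ and feeding the decay $(1+2^k|z|)^{-N}$ through this block structure is what one organizes into the summable weights $(1+|q|)^{-2}$ and the shifted averages $\sum_q(1+|q|)^{-2}\int_q^{q+1}\mathcal{M}^{(1)}g(x,y-z)\,dz$, i.e. the first term of \eqref{low}.

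The main obstacle is exactly this last step: obtaining the \emph{sharp} weighted shifted-average form rather than the cruder two-parameter bound $\mathcal{M}^{(2)}\mathcal{M}^{(1)}g$ that a naive estimate produces. The difficulty is that the horizontal translation by $t$ and the vertical displacement $\Gamma(x,t)$ are governed by the \emph{same} parameter $t$; discarding this coupling (and the cancellation in $\frac{dt}{t}$ together with the rapid decay of $K_k$) inflates the near-diagonal blocks and destroys summability of the weights. Retaining the coupling is therefore crucial, since only the precise form in \eqref{low} allows the vector-valued and scalar shifted-maximal estimates of Lemmas~\ref{ssme}--\ref{sme} to be applied when passing to $L^p$ in \eqref{4.1}.
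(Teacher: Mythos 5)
Your decomposition is the same as the paper's: split $g(x-t,y-\Gamma(x,t))$ into $g(x-t,y)$ plus an error, bound the flat principal-value piece by $H^\star g+\mathcal{M}^{(1)}g$ exactly as you do (sharp truncation plus an $L^1$-normalized remainder), and treat the error via a reproducing Schwartz kernel in $y$ together with the mean value theorem and the integrability of $(u(x)|t|^{\alpha-1}+v(x)|t|^{\beta-1})$ over the support of the cutoff. Up to that point the proposal is correct and matches the paper's proof of \eqref{3.3} and \eqref{3.2}.

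The one step you leave open --- converting the $z$-integral against the perturbed reproducing kernel into the weighted unit-block form $\sum_q(1+|q|)^{-2}\int_q^{q+1}\mathcal{M}^{(1)}g(x,y-z)\,dz$ --- is precisely where the paper's order of operations differs from yours, and it is why you perceive an ``obstacle'' that the paper does not encounter. The paper first rescales $y\mapsto 2^{-k}y$ and reduces to $k=0$ (with $u,v$ replaced by $2^ku,2^kv$, which is harmless since the constants are uniform in $u,v$). At $k=0$ the reproducing kernel $\Phi$ sits at unit scale and $|\Gamma(x,t)|\lesssim 1$ on the support of the cutoff, so the mean value theorem gives directly
$|\Phi(z-\Gamma(x,t))-\Phi(z)|\lesssim (u(x)|t|^\alpha+v(x)|t|^\beta)\sum_{q}(1+|q|)^{-2}\chi_{[q,q+1)}(z)$,
and the weights and unit blocks fall out with no further work; the $t$-integral is then absorbed into $\mathcal{M}^{(1)}$ by the radial decreasing majorant lemma (Grafakos, Theorem 2.1.10) when $\alpha<1$ or $\beta<1$, which is the rigorous version of your ``dyadic splitting.'' Working at general $k$ as you do, the kernel $K_k$ lives at scale $2^{-k}$, so your unit-block decomposition genuinely does not produce the weights $(1+|q|)^{-2}$ on the near-diagonal blocks (for $k>0$ the blocks $q\in\{-1,0\}$ yield a $2^{-k}$-scale average, i.e.\ $\mathcal{M}^{(2)}\mathcal{M}^{(1)}g$, not $\int_q^{q+1}\mathcal{M}^{(1)}g$). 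Two further remarks: (i) after the paper's rescaling the blocks in \eqref{low} are really at scale $2^{-k}$ rather than unit scale, but this is immaterial for the application, which only uses Minkowski's inequality and translation invariance of the $L^p_y$ norm; (ii) for the same reason your concern that the ``cruder'' bound $\mathcal{M}^{(2)}\mathcal{M}^{(1)}g$ would not suffice is unfounded --- the subsequent $L^p$ estimate for the $ll$ piece uses only vector-valued bounds for $\mathcal{M}^{(1)}$ and $H^\star$, not the shifted maximal estimates, and an iterated Fefferman--Stein bound would serve equally well, though it would not prove the lemma as literally stated.
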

Thanks to  Lemma \ref{l4.1}, the estimate of $H_{\alpha,\beta}^{ll}P_k^{(2)}f$ is a direct consequence. In fact, by  Minkowski's inequality, we bound the corresponding norm of the first term on the right side of (\ref{low}) by
$$
\begin{aligned}
\sum_{q\in\Z}\frac{1}{(1+|q|)^2}
\int_q^{q+1}\Big\| \big(\sum_{k\in\Z}
|\mathcal{M}^{(1)}P_k^{(2)}f(x,y-z)|^2\big)^\frac{1}{2}\Big\|_p  dz
\lesssim&\ \Big\| \big(\sum_{k\in\Z}
|\mathcal{M}^{(1)}P_k^{(2)}f|^2\big)^\frac{1}{2}\Big\|_p.
\end{aligned}
$$
As a result, we  achieve (\ref{4.1}) with $H_{\alpha,\beta}$ replaced by $H_{\alpha,\beta}^{ll}$ via (\ref{low}) and the vector estimates of $\mathcal{M}^{(1)}$ and $H^\star$.
\subsection{The estimate of $H_{\alpha,\beta}^{hh}P_k^{(2)}f$}
 In fact, the estimate of $H_{\alpha,\beta}^{hh}P_k^{(2)}f$ is the core content in this article. Changing variables $m\rightarrow m-\frac{k}{\alpha}$ and  $l\rightarrow j-\frac{k}{\beta}$, we have
 $$H_{\alpha,\beta}^{hh}P_k^{(2)}f=\sum_{j,m\ge 0}
 T_{k,m,j}(P_k^{(2)}f)(x,y),$$
 where $T_{k,m,j}$ is defined by
 $$T_{k,m,j} f(x,y)
 =\int_\R f(x-t,y-u(x)[t]^\alpha-v(x)[t]^\beta)
\psi_{m-\frac{k}{\alpha}}(u(x)^\frac{1}{\alpha}t)
\psi_{j-\frac{k}{\beta}}(v(x)^\frac{1}{\beta}t)\frac{dt}{t}.$$
 To get the desired estimate of $H_{\alpha,\beta}^{hh}P_k^{(2)}f$,
 it suffices to show  there exists $\tau_1>0$ such that
 \begin{equation}\label{4.2}
 \Big\|\big(\sum_{k\in\Z}|T_{k,m,j}(P_k^{(2)}f)|^2
 \big)^\frac{1}{2}\Big\|_p
 \lesssim\ 2^{-\max\{j\beta,m\alpha\}\tau_1}\|f\|_p.
 \end{equation}
  By   Stein-Wainger's method \cite{SW01}, we first prove the special case $p=2$, which is described by  the following lemma. We postpone  the proof  in Section \ref{s6}.
 \begin{lemma}\label{l2}
 (\ref{4.2}) holds for $p=2$.
 \end{lemma}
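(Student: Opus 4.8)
The plan is to prove the $L^2$ estimate \eqref{4.2} by reducing it, via Plancherel in the $y$-variable, to a uniform decay bound for a family of oscillatory convolution operators, and then to recognize that decay as an instance of Lemma~\ref{l1.1}. First I would freeze the frequency by writing $\eta$ for the dual variable to $y$ and applying $\mathcal{F}^y$; since $u(x),v(x)$ depend only on $x$, the operator $T_{k,m,j}$ acts as a multiplier-type convolution in the $t$-integration that, after taking the Fourier transform in $y$, becomes multiplication of $\mathcal{F}^y(f)(x-t,\eta)$ by the phase $e^{-i\eta(u(x)[t]^\alpha+v(x)[t]^\beta)}$. The key is that the support conditions $\psi_{m-k/\alpha}(u(x)^{1/\alpha}t)$ and $\psi_{j-k/\beta}(v(x)^{1/\beta}t)$ localize $t$ to a scale $a$ comparable to $u(x)^{-1/\alpha}2^{m-k/\alpha}$ (and simultaneously $v(x)^{-1/\beta}2^{j-k/\beta}$), and that on this scale the effective size of the phase parameters $\lambda=(\lambda_1,\lambda_2)=(\eta u(x) a^\alpha,\eta v(x) a^\beta)$ is $\approx(|\eta|2^{(m-k/\alpha)\alpha},|\eta|2^{(j-k/\beta)\beta})$, i.e.\ of order $|\eta|2^{m\alpha-k}$ and $|\eta|2^{j\beta-k}$.

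Next I would rescale the $t$-integral to the unit scale so that $T_{k,m,j}$ is expressed as convolution of $\mathcal{F}^y(f)(\cdot,\eta)$ with a kernel of exactly the form $\Phi^{\lambda,b}_a$ appearing in \eqref{para}, with the cutoff $\theta(bt)$ absorbing the second localization. Because $P_k^{(2)}$ restricts $\eta$ to $|\eta|\approx 2^k$, the relevant lower bound for $|\lambda|=|\lambda_1|+|\lambda_2|$ becomes $r\approx 2^{\max\{m\alpha,j\beta\}}$ (using $|\eta|\approx 2^k$ to cancel the $2^{-k}$ factors). Applying Lemma~\ref{l1.1} with $\digamma=\mathcal{F}^y(f)(\cdot,\eta)$ and this value of $r$, and dominating the sharp operator by the supremum over $a>0$, $b\in\R$, $|\lambda|\ge r$, I obtain for each fixed $\eta$ the bound
\begin{equation*}
\big\|T_{k,m,j}f(\cdot,\eta)\big\|_{L^2(dx)}\lesssim 2^{-\gamma_0\max\{m\alpha,j\beta\}}\big\|\mathcal{F}^y(f)(\cdot,\eta)\big\|_{L^2(dx)}.
\end{equation*}
Integrating in $\eta$ and invoking Plancherel in $y$ recovers \eqref{4.2} for a single $(k,m,j)$; the square-sum over $k$ on the left is handled because the $P_k^{(2)}$ have finitely-overlapping frequency supports, so the $\ell^2_k$-norm is comparable to $\|f\|_2$ up to the Littlewood--Paley orthogonality, giving the stated $2^{-\tau_1\max\{j\beta,m\alpha\}}$ with $\tau_1=\gamma_0$.

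The main obstacle I anticipate is verifying that the two cutoffs $\psi_{m-k/\alpha}$ and $\psi_{j-k/\beta}$ are genuinely compatible, i.e.\ that the scale $a$ selected by the first localization makes the second localization behave like a \emph{bounded} smooth cutoff $\theta(bt)$ with $|b|\lesssim 1$ rather than an empty or inconsistent constraint. Concretely, after setting $a\approx u(x)^{-1/\alpha}2^{m-k/\alpha}$ one must check that $v(x)^{1/\beta}a = v(x)^{1/\beta}u(x)^{-1/\alpha}2^{(m-k/\alpha)}$ times $2^{-(j-k/\beta)}$ stays of order one precisely on the support where both $\psi$'s are nonzero, so that the product of the two cutoffs collapses to a single $\theta$ supported on $[-2,2]$; this is exactly the mechanism by which $\alpha\neq\beta$ is used, and it is the step where the hypothesis $(\alpha,\beta)\in S_1$ enters. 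I would also need to confirm that the lower frequency bound $r\approx 2^{\max\{m\alpha,j\beta\}}$ is correct regardless of which of the two monomials dominates, since the $\max$ in the target exponent reflects taking the larger of the two phase contributions; this requires a short case analysis on whether $m\alpha\ge j\beta$ or not, but in either case at least one component of $\lambda$ is $\gtrsim 2^{\max\{m\alpha,j\beta\}}$, which suffices to trigger the decay in Lemma~\ref{l1.1}.
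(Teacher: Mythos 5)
Your proposal is correct and follows essentially the same route as the paper: after reducing by Littlewood--Paley orthogonality at $p=2$ (and scaling to $k=0$) and applying Plancherel in $y$, the kernel is identified with $\Phi^{\lambda,b}_a$ from \eqref{para} and Lemma~\ref{l1.1} supplies the decay, with the frequency localization of $P_k^{(2)}$ forcing $|\lambda|\gtrsim 2^{\max\{m\alpha,j\beta\}}$. The only cosmetic difference is that you invoke Lemma~\ref{l1.1} once using $|\lambda|=|\lambda_1|+|\lambda_2|$, whereas the paper applies it twice with the roles of the two monomials interchanged and takes the better bound; these are equivalent.
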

 Thanks to interpolation theorem, it is enough to prove
 \begin{equation}\label{4.3}
 \Big\|\big(\sum_{k\in\Z}|T_{k,m,j}(P_k^{(2)}f)|^2
 \big)^\frac{1}{2}\Big\|_p
 \lesssim\ (j^2\beta^2+m^2\alpha^2 +C_{\alpha,\beta})^2\|f\|_p,
 \end{equation}
 where $C_{\alpha,\beta}$ is a constant which only depends on $\alpha$ and $\beta$.
As the statement at the beginning of this section, we will split the sum of $k$ into three   parts
$$\sum_{k\in\Z}=\sum_{k< k_{\alpha,\beta,x}^1}+\sum_{ k_{\alpha,\beta,x}^1\le k< k_{\alpha,\beta,x}^2}+\sum_{k\ge k_{\alpha,\beta,x}^2},$$
where
$$k_{\alpha,\beta,x}^1=\frac{ \log_2 \big(v(x)^\alpha u(x)^{-\beta}\big)-\alpha\beta j}{\beta-\alpha},\
\ k_{\alpha,\beta,x}^2=\frac{\alpha\beta m+ \log_2 \big(v(x)^\alpha u(x)^{-\beta}\big)}{\beta-\alpha}.$$
Note that
\begin{equation}\label{jia1}
k_{\alpha,\beta,x}^2-k_{\alpha,\beta,x}^1=\frac{\alpha\beta(m+j)
}{\beta-\alpha}\ge0.
\end{equation}
\vskip.1in
To prove (\ref{4.3}), it suffices to prove
\begin{equation}\label{a1}
\begin{aligned}
\Big\|\big(\sum_{k< k_{\alpha,\beta,x}^1}|T_{k,m,j}(P_k^{(2)}f)|^2
 \big)^\frac{1}{2}\Big\|_p
 \lesssim\ (2+j^2\beta^2+m^2\alpha^2)\|f\|_p
\end{aligned}
\end{equation}

\begin{equation}\label{a2}
\begin{aligned}
\Big\|\big(\sum_{k\ge k_{\alpha,\beta,x}^2}|T_{k,m,j}(P_k^{(2)}f)|^2
 \big)^\frac{1}{2}\Big\|_p
 \lesssim\ (2+j^2\beta^2+m^2\alpha^2)\|f\|_p
\end{aligned}
\end{equation}

\begin{equation}\label{a3}
\begin{aligned}
\Big\|\big(\sum_{ k_{\alpha,\beta,x}^1\le k< k_{\alpha,\beta,x}^2 }|T_{k,m,j}(P_k^{(2)}f)|^2
 \big)^\frac{1}{2}\Big\|_p
 \lesssim\ (j^2\beta^2+m^2\alpha^2+C_{\alpha,\beta})^2\|f\|_p
\end{aligned}
\end{equation}
 Denote
 $$F_k(x,y):=f(x,2^{-k}y),\ \ (u_k(x),v_k(x))=2^k(u(x),v(x)).$$
Changing variables $z\rightarrow 2^{-k}z$ gives
 $$
 \begin{aligned}
 &\ \ (P_k^{(2)}f)(x-t,y-u(x)[t]^\alpha-v(x)[t]^\beta)\\
 =&\int 2^k\breve{\psi}(2^kz) f(x-t,y-z-u(x)[t]^\alpha-v(x)[t]^\beta)dz\\
 =& \int \breve{\psi}(z) f(x-t,y-2^{-k}z-u(x)[t]^\alpha-v(x)[t]^\beta)dz\\
 =& \int \breve{\psi}(z) f\big(x-t, 2^{-k}(2^ky-z-u_k(x)[t]^\alpha-v_k(x)[t]^\beta)\big)dz\\
 =& (P_0^{(2)}F_k)(x-t,2^ky-u_k(x)[t]^\alpha-v_k(x)[t]^\beta).
 \end{aligned}
 $$
Thus, we rewrite   $T_{k,m,j}(P_k^{(2)}f)(x,y)$ as
 \begin{equation*}
   \begin{aligned}
 \int_\R (P_0^{(2)}F_k)(x-t,2^ky-u_k(x)[t]^\alpha-v_k(x)[t]^\beta)
 \psi_m(u_k(x)^\frac{1}{\alpha}t)\psi_j(v_k(x
 )^\frac{1}{\beta}t)\frac{dt}{t}
   \end{aligned}
 \end{equation*}
 Next, we focus on the pointwise estimate  of the following integral:
 \begin{equation}\label{ii}
 TP_0^{(2)}f(x,y)=\int_\R (P_0^{(2)}f)(x-t,y-u_k(x)[t]^\alpha-v_k(x)[t]^\beta)
 \psi_m(u_k(x)^\frac{1}{\alpha}t)\psi_j(v_k(x
 )^\frac{1}{\beta}t)\frac{dt}{t},
 \end{equation}
 which can yield the associated estimate of  $T_{k,m,j}(P_k^{(2)}f)$ by the scaling arguments.
\vskip.1in
{\bf $\bullet$ The proof of (\ref{a1})}
 Due to the support  of $\psi$,
 \begin{equation}\label{yong}
 \lambda_{x,j}:=2^jv_k(x)^{-\frac{1}{\beta}}\in [ 2^{m-1}u_k(x)^{-\frac{1}{\alpha}},2^{m+1}u_k(x)^{-\frac{1}{\alpha}}]
 \end{equation}
 and
 the range of $t$ in (\ref{ii}) is
 $$
 \begin{aligned}
 \mathcal{I}:&=\{t:2^{j-1}v_k(x)^{-\frac{1}{\beta}}\le |t|\le 2^{j+1}v_k(x)^{-\frac{1}{\beta}}\}\\
 &\cap\{t:2^{m-1}u_k(x)^{-\frac{1}{\alpha}}
  \le |t|\le 2^{m+1}u_k(x)^{-\frac{1}{\alpha}}\}.
  \end{aligned}$$
 Because of
 $$P_0^{(2)}f(x,y)=\int \breve{\psi}(z)f (x,y-z)dz$$
 yielding
 $$
 \begin{aligned}
 |P_0^{(2)}f(x,y)|
 \lesssim&\  \int \frac{1}{1+|z|^8}|f (x,y-z)|dz\\
 \lesssim&\  \int \sum_{\tau\in \Z}\chi_{[\tau,\tau+1)}(z)\frac{1}{1+|z|^8}|f (x,y-z)|dz\\
 \lesssim&\ \sum_{\tau\in\Z}\frac{1}{1+|\tau|^8}
 \int_\tau^{\tau+1} |f (x,y-z)|dz,
 \end{aligned}
 $$
 we bound $|TP_0^{(2)}f(x,y)|$ by
 $$
 \begin{aligned}
  &\ \  \sum_{\tau\in\Z}\frac{1}{1+|\tau|^8}
 \int \int_\tau^{\tau+1} |f (x-t,y-z-u_k(x)[t]^\alpha-v_k(x)[t]^\beta)|\\
&\  \hskip.8in\psi_m(u_k(x)^\frac{1}{\alpha}t)\psi_j(v_k(x
 )^\frac{1}{\beta}t)dz\frac{dt}{|t|}\\
 \lesssim&\ \ \sum_{\tau\in\Z}\frac{1}{1+|\tau|^8}\frac{1}{\lambda_{x,j}}
 \int_{t\in \mathcal{I}} \int_\tau^{\tau+1} |f (x-t,y-z-u_k(x)[t]^\alpha-v_k(x)[t]^\beta)|\\
 &\hskip.9in
\psi_j(v_k(x
 )^\frac{1}{\beta}t)dz dt.
 \end{aligned}
 $$
 The following procedure is  splitting $\mathcal{I}$ into small intervals $\{I_r\}$, which makes $u(x)[t]^\alpha\in I_{x,t}^1$  and $v(x)[t]^\beta\in I_{x,t}^2$ (here $|I_{x,t}^i|\lesssim 1 (i=1,2)$), and estimating the above integral by  the shifted maximal function.
 \vskip.1in
 Choosing $\delta_{x,j,m}=\lambda_{x,j}\min\{2^{-m\alpha},2^{-j\beta}\}$
  (this choice of $\delta_{x,j}$ is to get (\ref{goal})). We define
  $$ I_{r}:=\{t:\ \frac{1}{2}\lambda_{x,j}
  +r\delta_{x,j,m}
  \le |t|\le \frac{1}{2}\lambda_{x,j}
  +(r+1)\delta_{x,j,m}\},$$
  and then we see $|I_r|=2\delta_{x,j,m}$ and
  $$\mathcal{I}\subset \{\frac{1}{2}\lambda_{x,j}\le|t|\le 2\lambda_{x,j}\}  \subset \big( \bigcup_{r=0}^{N_{j,m}-1} I_r\big),$$
  where $\frac{3}{2}\lambda_{x,j}\le N_{j,m}\delta_{x,j}\le 2\lambda_{x,j}$ (which yields $\frac{3}{2}\max\{2^{j\beta},2^{m\alpha}\}\le N_{j,m}\le 2\max\{2^{j\beta},2^{m\alpha}\}$).
 So
 $$
 \begin{aligned}
 &\ \  |TP_0^{(2)}f(x,y)|\\
 \lesssim&\ \sum_{\tau\in\Z}\frac{1}{1+|\tau|^8}
  \frac{1}{N_{j,m}}\sum_{r=0}^{N_{j,m}-1}\frac{1}{|I_r|}
  \int_{I_r}\int_\tau^{\tau+1}|f (x-t,y-z-u_k(x)[t]^\alpha-v_k(x)[t]^\beta)|dtdz.
 \end{aligned}
 $$
 Since  $(t,z)\in I_r\times [\tau,\tau+1]$, we have $z+u_k(x)[t]^\alpha+v_k(x)[t]^\beta\in J_r$, which is defined by
  $$
  \begin{aligned}
  J_r:=& [\tau+u_k(x)(\frac{1}{2}
  \lambda_{x,j}+r\delta_{x,j,m})^\alpha+v_k(x)(\frac{1}{2}
  \lambda_{x,j}+r\delta_{x,j,m})^\beta,\\
  &\ \tau+1+u_k(x)(\frac{1}{2}
  \lambda_{x,j}+(r+1)\delta_{x,j,m})^\alpha+v_k(x)(\frac{1}{2}
  \lambda_{x,j}+(r+1)\delta_{x,j,m})^\beta].
  \end{aligned}
  $$
  We also have
\begin{equation}\label{f1}
 \begin{aligned}
 &\ \ |TP_0^{(2)}f(x,y)|\\
 \lesssim&\ \sum_{\tau\in\Z}\frac{1}{1+|\tau|^8}
  \frac{1}{N_{j,m}}\sum_{r=0}^{N_{j,m}-1}\frac{1}{|I_r|}
  \int_{I_r}\int_{J_r}|f (x-t,y-z)|dtdz.
 \end{aligned}
 \end{equation}
  By the mean value theorem, we  see
  $$1\le |J_r|\lesssim\ 1+
 u_k(x)
\lambda_{x,j}^{\alpha-1}\delta_{x,j}
  + v_k(x)
  \lambda_{x,j}^{\beta-1}\delta_{x,j}.
  $$
 Recall $\delta_{x,j,m}=\lambda_{x,j}\min\{2^{-m\alpha},2^{-j\beta}\}$. It follows by (\ref{yong}) that
  \begin{equation}\label{goal}
  u_k(x)
\lambda_{x,j}^{\alpha-1}\delta_{x,j,m}
  + v_k(x)
  \lambda_{x,j}^{\beta-1}\delta_{x,j,m}\lesssim 1,
  \end{equation}
  which yields $1\le|J_r|\lesssim1$.
 Hence, there exists  a positive integer $C_{\alpha,\beta} (\lesssim1)$ such that
 \begin{equation}\label{f2}
  J_r\subset\big(\bigcup_{i=0}^{
  C_{\alpha,\beta}}[\tau+a_{j,m,k}(x)+i,\tau+a_{j,m,k}(x)+i+1] \big),
  \end{equation}
  where
  $$a_{j,m,k}(x):=u_k(x)(\frac{1}{2}
  \lambda_{x,j}+r\delta_{x,j,m})^\alpha+v_k(x)(\frac{1}{2}
  \lambda_{x,j}+r\delta_{x,j,m})^\beta.$$
  We first have by (\ref{yong}) that
  \begin{equation}\label{add1}
  a_{j,m,k}(x)\lesssim\ 2^{m\alpha}+2^{j\beta}.
  \end{equation}
  Furthermore,
  the definitions of $\lambda_{x,j}$, $\delta_{x,j,m}$ and $(u_k,v_k)$  gives
  $$
  \begin{aligned}
  a_{j,m,k}(x)=&\ 2^{j\beta}(\frac{1}{2}+r\min\{2^{-m\alpha},2^{-j\beta}\})^\beta\\
  &
  +2^{k(1-\frac{\alpha}{\beta})}u(x)v(x)^{-\frac{\alpha}{\beta}}
  2^{j\alpha}(\frac{1}{2}+r\min\{2^{-m\alpha},2^{-j\beta}\})^\alpha.
  \end{aligned}
  $$
  Combining with (\ref{f1}) and (\ref{f2}) leads to
$$
  \begin{aligned}
 &\ \ |T(P_0^{(2)}f)(x,y)|\\
 \lesssim&
 \ \sum_{\tau\in\Z}\frac{1}{1+|\tau|^8}
  \frac{1}{N_{j,m}}\sum_{r=0}^{N_{j,m}-1}
  \sum_{i=0}^{C_{\alpha,\beta}}\frac{1}{|I_r|}
  \int_{I_r}\int_{\tau+a_{j,m,k}(x)+i}^{\tau+a_{j,m,k}(x)+i+1}|f (x-t,y-z)|dtdz.
  \end{aligned}
  $$
By (\ref{add1}), we observe
  \begin{equation}\label{goal2}
  \frac{1}{|I_r|}
  \int_{I_r}\int_{\tau+a_{j,m,k}(x)+i}^{\tau+a_{j,m,k}(x)+i+1}|f (x-t,y-z)|dtdz
  \lesssim\ M_1^{[\sigma_r^1]}(\mathfrak{M}_2^{[\sigma_r^2(x,k,i)]}f)(x,y),
  \end{equation}
  where
  \begin{equation}\label{end1}
  \begin{aligned}
  &\ \sigma_r^1:=r+\max\{2^{m\alpha-1},2^{j\beta-1}\}\lesssim\ \max\{2^{j\beta},2^{m\alpha}\},\\
  &\sigma_r^2(x,k,i):=\tau+i+a_{j,m,k}(x)\lesssim\ \tau+C_{\alpha,\beta} +\max\{2^{j\beta},2^{m\alpha}\}.
  \end{aligned}
  \end{equation}
  Here $M^{[\cdot]}_i$ and $\mathfrak{M}^{[\cdot]}_i$ ($i=1,2$) are the shifted maximal operators (defined in Section \ref{s2}) applied in the $i-$th variable.
Notice  that $\sigma_r^2(x,k,i)$ depends on  $k$, which suggests that  our procedure  is  very different from \cite{GHLJ}, and more involved.
We now have
\begin{equation}\label{e1}
  \begin{aligned}
 |T(P_0^{(2)}f)(x,y)|\lesssim&
 \ \sum_{\tau\in\Z}\frac{1}{1+|\tau|^8}
  \frac{1}{N_{j,m}}\sum_{r=0}^{N_{j,m}-1}
  \sum_{i=0}^{C_{\alpha,\beta}}M_1^{[\sigma_r^1]}(
  \mathfrak{M}_2^{[\sigma_r^2(x,k,i)]}f)(x,y).
  \end{aligned}
  \end{equation}
  \begin{rem}\label{r10}
  We point out that we do not use $k<k_{\alpha,\beta,x}^1$ in the proof of  (\ref{e1}). In fact, (\ref{e1}) will be used in the proof of (\ref{a3}).
  \end{rem}
  We can not use the estimate of the shifted maximal operator given in Lemma \ref{sme} directly. However, we have assumed
 $\alpha<\beta$ and
$k\le k_{\alpha,\beta,x}^1=\frac{ \log_2 \big(v(x)^\alpha u(x)^{-\beta}\big)-\alpha\beta j}{\beta-\alpha}$ in (\ref{a1}) so that
$$2^{k(1-\frac{\alpha}{\beta})}u(x)v(x)^{-\frac{\alpha}{\beta}}
  2^{j\alpha}\le1.$$
 Thus, by Lemma \ref{lnew}, there exists a positive constant $C_{\alpha,\beta}'$ independent of $k$  such that
   $$M_1^{[\sigma_r^1]}(
  \mathfrak{M}_2^{[\sigma_r^2(x,k,i)]}f)(x,y)
  \le\ \sum_{l=0}^{C_{\alpha,\beta}'}M_1^{[\sigma_r^1]}(
  \mathfrak{M}_2^{[\sigma_r^3(i)+l]}f)(x,y),$$
  where
  $$\sigma_r^3(i):=\tau+i+2^{j\beta}
  (\frac{1}{2}+r\min\{2^{-m\alpha},2^{-j\beta}\})^\beta.$$
The scaling arguments  gives
\begin{equation*}
\begin{aligned}
&\ \ |T_{k,m,j}(P_k^{(2)}f)(x,y)|
=|T(P_0^{(2)}F_k)(x,2^ky)|\\
\lesssim&\ \sum_{\tau\in\Z}\frac{1}{1+|\tau|^8}
  \frac{1}{N_{j,m}}\sum_{r=0}^{N_{j,m}-1}
  \sum_{i=0}^{C_{\alpha,\beta}}\sum_{l=0}^{
  C_{\alpha,\beta}'}M_1^{[\sigma_r^1]
  }(\mathfrak{M}_2^{[\sigma_r^3(i)+l]}F_k)(x,2^ky)\\
  \lesssim&\ \sum_{\tau\in\Z}\frac{1}{1+|\tau|^8}
  \frac{1}{N_{j,m}}\sum_{r=0}^{N_{j,m}-1}
  \sum_{i=0}^{C_{\alpha,\beta}}\sum_{l=0}^{
  C_{\alpha,\beta}'}M_1^{[\sigma_r^1]
  }(M_2^{[\sigma_r^3(i)+l]}F_k)(x,2^ky)\\
\lesssim&\ \sum_{\tau\in\Z}\frac{1}{1+|\tau|^8}
  \frac{1}{N_{j,m}}\sum_{r=0}^{N_{j,m}-1}
  \sum_{i=0}^{C_{\alpha,\beta}}\sum_{l=0}^{C_{\alpha,\beta}'
  }M_1^{[\sigma_r^1]
  }(M_2^{[\sigma_r^3(i)+l]}f)(x,y).
\end{aligned}
\end{equation*}
Because of $P_k^{(2)}=\tilde{P}_k^{(2)}P_k^{(2)}$ (here $\tilde{P}_k^{(2)}=P_{k-1}^{(2)}+P_k^{(2)}+P_{k+1}^{(2)}$), we can obtain by the same way that
$$
\begin{aligned}
|T_{k,m,j}(P_k^{(2)}f)(x,y)|\lesssim&\
\sum_{\tau\in\Z}\frac{1}{1+|\tau|^8}
  \frac{1}{N_{j,m}}\sum_{r=0}^{N_{j,m}-1}\\
  &\
  \sum_{i=0}^{C_{\alpha,\beta}}\sum_{l=0}^{C_{\alpha,\beta}'
  }M_1^{[\sigma_r^1]
  }(M_2^{[\sigma_r^3(i)+l]}P_k^{(2)}f)(x,y).
  \end{aligned}
  $$
So we can accomplish (\ref{a1})  by Minkowski's inequality if we can show that
\begin{equation}\label{4.4}
\Big\|\big(\sum_{k\in\Z}|M_1^{[\sigma_r^1]
  }(M_2^{[\sigma_r^3(i)+l]}P_k^{(2)}f)(x,y)|^2\big)^\frac{1}{2}
\Big\|_p\lesssim_{\alpha,\beta}\ (1+\tau^2)(2+j^2\beta^2+m^2\alpha^2)^2\|f\|_p
\end{equation}
holds for all $0\le i\le C_{\alpha,\beta}$ and $0\le l\le C_{\alpha,\beta}'$, where the constant only  depends on $\alpha$ and $\beta$.
Thanks to Lemma \ref{sme} and (\ref{end1}),  the left-hand side of (\ref{4.4}) is controlled by
$$
\begin{aligned}
&\ \log^2(2+\sigma_r^1) \Big\|\big(\sum_{k\in \Z}|(M_2^{[\sigma_r^3(i)+l]}P_k^{(2)}f)(x,y)|^2\big)^\frac{1}{2}
\Big\|_p\\
 \lesssim&\
(2+j^2\beta^2+m^2\alpha^2) \Big\|\big(\sum_{k\in\Z}|(M_2^{[\sigma_r^3(i)+l]}P_k^{(2)}f)(x,y)
|^2\big)^\frac{1}{2}
\Big\|_p\\
\lesssim&\  (2+j^2\beta^2+m^2\alpha^2)\Big\| \log^2(2+\sigma_r^3(i)+l)\|\big(\sum_{k\in\Z}
|(P_k^{(2)}f)(x,y)
|^2\big)^\frac{1}{2}\|_{L^p_y}\Big\|_{L^p_x} \\
\lesssim&\ \log^4(2+|\tau|)(2+j^2\beta^2+m^2\alpha^2)^2\|f\|_p,
\end{aligned}
$$
 which is bounded by the right-hand of (\ref{4.4}).
\vskip.3in
{\bf $\bullet$ The proof of (\ref{a2})}
The proof is very similar to the proof of (\ref{a1}). Indeed, taking
$$\lambda_{x,m}=2^mu_k(x)^{-\frac{1}{\alpha}},
\ \ \delta_{x,j,m}=\lambda_{x,m}\min\{2^{-m\alpha},2^{-j\beta}\},$$
where $\lambda_{x,m}$ plays the same role as $\lambda_{x,j}$ (see (\ref{yong})), and repeating the previous arguments yields
$$
  \begin{aligned}
 |T(P_0^{(2)}f)(x,y)|\lesssim&
 \ \sum_{\tau\in\Z}\frac{1}{1+|\tau|^8}
  \frac{1}{N_{j,m}}\\
  &\ \sum_{r=0}^{N_{j,m}-1}
  \sum_{i=0}^{C_{\alpha,\beta}}\frac{1}{|I_r|}
  \int_{I_r}\int_{\tau+a_{j,m,k}+i}^{\tau+a_{j,m,k}+i+1}|f (x-t,y-z)|dtdz,
  \end{aligned}
  $$
  where $N_{j,m}$, $I_r$ is defined as before  and
  $$a_{j,m,k}(x):=u_k(x)(\frac{1}{2}
  \lambda_{x,m}+r\delta_{x,j,m})^\alpha+v_k(x)(\frac{1}{2}
  \lambda_{x,m}+r\delta_{x,j,m})^\beta$$
  satisfying
  $$
  \begin{aligned}
  a_{j,m,k}(x)=& 2^{m\alpha}(\frac{1}{2}+r\min\{2^{-m\alpha},2^{-j\beta}\})^\alpha\\
  &
  +2^{k(1-\frac{\beta}{\alpha})}v(x)u(x)^{-\frac{\beta}{\alpha}}
  2^{m\beta}(1+r\min\{2^{-m\alpha},2^{-j\beta}\})^\beta
  \end{aligned}
  $$
  and
  $$a_{j,m,k}(x)\lesssim\ 2^{m\alpha}+2^{j\beta}.$$
Since we have assumed that $\alpha<\beta$ and
$k\ge k_{\alpha,\beta,x}^2=\frac{\alpha\beta m+ \log_2 \big(v(x)^\alpha u(x)^{-\beta}\big)}{\beta-\alpha}$,
we have
$$2^{k(1-\frac{\beta}{\alpha})}v(x)u(x)^{-\frac{\beta}{\alpha}}
  2^{m\beta}\le 1.$$
Then along the previous way leads to (\ref{a2}).
\vskip.1in
{\bf $\bullet$ The proof of (\ref{a3})}
Thanks to (\ref{jia1}),
\begin{equation}\label{ssl}
\sharp\{k\in\Z:\ k_{\alpha,\beta,A}^1\le k< k_{\alpha,\beta,A}^2\}\lesssim_{\alpha,\beta}(j\beta+m\alpha).
\end{equation}
By Minkowski's inequality and (\ref{ssl}),  the left hand side of (\ref{a3}) is bounded by
$$
\begin{aligned}
&\Big\|\sum_{ k_{\alpha,\beta,A}^1\le k\le k_{\alpha,\beta,A}^2 }|T_{k,m,j}(P_k^{(2)}f)|
 \Big\|_p\\
 \lesssim&\ \sharp\{k\in\Z:\ k_{\alpha,\beta,A}^1\le k< k_{\alpha,\beta,A}^2\}\sup_{k\in\Z}\|T_{k,m,j}(P_k^{(2)}f)(x,y)\|_p\\
\lesssim&\ (j\beta+m\alpha)
\sup_{k\in\Z}\|T_{k,m,j}(P_k^{(2)}f)(x,y)\|_p.
\end{aligned}
$$
Recall $F_k(x,y)=f(x,2^{-k}y)$. As the statement in Remark \ref{r10}, thanks to (\ref{e1}), we deduce by the scaling arguments that
\begin{equation*}
  \begin{aligned}
 |T_{k,m,j}&(P_k^{(2)}f)(x,y)|
=|T(P_0^{(2)}F_k)(x,2^ky)|\\
\lesssim&\ \sum_{\tau\in\Z}\frac{1}{1+|\tau|^8}
  \frac{1}{N_{j,m}}\sum_{r=0}^{N_{j,m}-1}
  \sum_{i=0}^{C_{\alpha,\beta}}M_1^{[\sigma_r^1]
  }(M_2^{[\sigma_r^2(x,k,i)]}F_k)(x,2^ky)\\
  \lesssim&\ \sum_{\tau\in\Z}\frac{1}{1+|\tau|^8}
  \frac{1}{N_{j,m}}\sum_{r=0}^{N_{j,m}-1}
  \sum_{i=0}^{C_{\alpha,\beta}}M_1^{[\sigma_r^1]
  }(M_2^{[\sigma_r^2(x,k,i)]}f)(x,y).
  \end{aligned}
  \end{equation*}
We complete the proof of (\ref{a3}) by  (\ref{end1}) and the scalar shifted maximal estimate (see (\ref{ssme1}) in Lemma \ref{ssme}).
\subsection{The estimates of $H_{\alpha,\beta}^{lh}P_k^{(2)}f$ and $H_{\alpha,\beta}^{hl}P_k^{(2)}f$}
Due to the property of the support of $\phi$, $\Gamma(x,t)$  essentially plays the same role as  the single fractional monomial. So we expect that the proof is similar to the previous work \cite{GHLJ}, and easier than the  estimate of $H_{\alpha,\beta}^{hh}P_k^{(2)}f$.
\vskip.1in
We only give a sketch of the estimate of $H_{\alpha,\beta}^{lh}P_k^{(2)}f$. By the variable substitution $l\rightarrow -k/\beta+j$, we get
$$H_{\alpha,\beta}^{lh}P_k^{(2)}f(x,y)=\sum_{j\ge 0}T_{k,j}(P_k^{(2)}f)(x,y),$$
where $T_{k,j}$ is defined by
 $$T_{k,j} f(x,y)
 =\int_\R f(x-t,y-u(x)[t]^\alpha-v(x)[t]^\beta)
\phi_{-\frac{k}{\alpha}}(u(x)^\frac{1}{\alpha}t)
\psi_{j-\frac{k}{\beta}}(v(x)^\frac{1}{\beta}t)\frac{dt}{t}.$$
It suffices to show  there exists $\iota>0$ such that
 \begin{equation*}
 \Big\|\big(\sum_{k\in\Z}|T_{k,j}(P_k^{(2)}f)|^2
 \big)^\frac{1}{2}\Big\|_p
 \lesssim\ 2^{-j\iota}\|f\|_p.
 \end{equation*}
 This, with the $L^2$ bound (which can be obtained by the same way leading to Lemma \ref{l2}), yields that it is enough to show
 $$\Big\|\big(\sum_{k\in\Z}|T_{k,j}(P_k^{(2)}f)|^2
 \big)^\frac{1}{2}\Big\|_p
 \lesssim\ (1+j)^4\|f\|_p.$$
 Since $t$ is in the support of $\phi_{-\frac{k}{\alpha}}(u(x)^\frac{1}{\alpha}t)$, after using the scaling arguments,
  $u(x)[t]^\alpha+v(x)[t]^\beta$ is approximated by $v(x)[t]^\beta$.  Indeed, along the same way yielding  (\ref{e1}), if we use the following notation:
  $$\lambda_{x,j}:=2^jv_k(x)^{-\frac{1}{\beta}},
  \ \ \delta_{x,j}=\lambda_{x,j}2^{-j\beta},\ N_{jm}\delta_{x,j}
  \in [\frac{3}{2}\lambda_{x,j},2\lambda_{x,j}],\ $$
  one can obtain
 $$T_{k,j}(P_k^{(2)}f)
 \lesssim\ \sum_{\tau\in\Z}\frac{1}{1+|\tau|^8}
  \frac{1}{N_{j,m}}\sum_{r=0}^{N_{j,m}-1}
  \sum_{i=0}^{C_{\alpha,\beta}}M_1^{[n_1(r)]
  }(M_2^{[n_2(r)+i+\tau]}P_k^{(2)}f)(x,y),
 $$
 where
 $$n_1(r)=r+2^{j\beta-1}\lesssim 2^{j\beta},\ n_2(r)+i+\tau\lesssim 2^{j\beta}+\tau.$$
 We complete the proof by applying Lemma \ref{sme} directly.
 \vskip.3in
 \section{Proof of Lemma \ref{l4.1}}
 \label{s5}
 \vskip .1in
 This section devotes to the proof of Lemma \ref{l4.1}.
 Since this estimate is independent of $u(x)$, $v(x)$ and $k$, by the scaling arguments, it suffices to show the case $k=0$:
 \begin{equation}\label{low1}
 \begin{aligned}
 H_{\alpha,\beta}^{ll}P_0^{(2)}f(x,y)
\lesssim&\ \sum_{q\in\Z}\frac{1}{(1+|q|)^2}
\int_q^{q+1} \mathcal{M}^{(1)}P_0^{(2)}f(x,y-z)  dz\\
&+\mathcal{M}^{(1)}P_0^{(2)}f(x,y)+H^\star P_0^{(2)}f (x,y),
\end{aligned}
\end{equation}
where
$$H_{\alpha,\beta}^{ll}P_0^{(2)}f(x,y)
:=p.v.\int_{\R} (P_0^{(2)}f)(x-t,y-\Gamma(x,t))  \phi(u(x)^\frac{1}{\alpha}t)\phi(v(x)^\frac{1}{\beta}t)\frac{dt}{t}.$$
Due to the support of $\phi$ yielding
\begin{equation}\label{3.0}
\big|\Gamma(x,t)\big|\le\ u(x)|t|^\alpha+v(x)|t|^\beta\lesssim 1,
\end{equation}
 $H_{\alpha,\beta}^{ll}$ can be approximated by $T^{ll}_{\alpha,\beta}$, which is defined by
$$T^{ll}_{\alpha,\beta}f(x,y):=p.v.
\int_\R f(x-t,y)\phi(u(x)^\frac{1}{\alpha}t)
\phi(v(x)^\frac{1}{\beta}t)\frac{dt}{t}.$$
To obtain (\ref{low1}), it suffices to show
\begin{equation}\label{3.3}
|H_{\alpha,\beta}^{ll}P_0^{(2)}f-T^{ll}_{\alpha,\beta}P_0^{(2)}f
|\lesssim\ \sum_{q\in\Z}\frac{1}{(1+|q|)^2}
\int_q^{q+1} \mathcal{M}^{(1)}P_0^{(2)}f(x,y-z)  dz
\end{equation}
and
\begin{equation}\label{3.2}
|T^{ll}_{\alpha,\beta} P_0^{(2)}f|\lesssim\ \mathcal{M}^{(1)}P_0^{(2)}f(x,y)+H^\star P_0^{(2)}f (x,y).
\end{equation}
We first prove (\ref{3.3}).
Denote the Schwartz function $\Phi(z)$ by
$$\sum_{i=-1}^1P_i^{(2)} f(x,y)= \int f(x,y-z)\Phi(z)dz,$$
which, together with $P_0^{(2)}=\sum_{i=-1}^1P_i^{(2)}P_0^{(2)}$, yields
$$P_0^{(2)}f(x,y)=\sum_{i=-1}^1P_i^{(2)}P_0^{(2)} f=\int_\R P_0^{(2)}f(x,y-z)\Phi(z)dz.$$
Then we rewrite
$H_{\alpha,\beta}^{ll}P_0^{(2)}f(x,y)-T^{ll}_{\alpha,\beta}
P_0^{(2)}f(x,y)$
 as
\begin{equation}\label{3.1}
\int\int (P_0^{(2)}f)(x-t,y-z)
\Big\{\Phi(z-\Gamma(x,t))-
\Phi(z)\Big\}\phi(u(x)^\frac{1}{\alpha}t)
\phi(v(x)^\frac{1}{\beta}t)\frac{dt}{t}dz.
\end{equation}
By the mean value theorem  and (\ref{3.0}),  we deduce
$$
\begin{aligned}
\big|\Phi(z-\Gamma(x,t))-
\Phi(z)\big|
=&\ \left|\int_0^1 \frac{d}{ds}(\Phi(z-s\Gamma(x,t))) ds\right|\\
\le&\ |\int_0^1 \Phi'(z-s\Gamma(x,t)) ds|\ |\Gamma(x,t)|\\
\lesssim&\ \int_0^1 \frac{ds}{(1+|z-s\Gamma(x,t)|)^2}\  |\Gamma(x,t)|\\
\lesssim&\  \sum_{q\in\Z}
\frac{1}{(1+|q|)^2}\ \chi_{[q,q+1)}(z)\ \big(u(x)
|t|^\alpha+v(x)|t|^\beta\big).
\end{aligned}
$$
With this we bound
(\ref{3.1})  by
$$
\begin{aligned}
&\ \ \sum_{q\in\Z}\frac{1}{(1+|q|)^2}
\int_q^{q+1}\int |(P_0^{(2)}f)(x-t,y-z)|\\
\times&
\big(u(x)|t|^{\alpha-1}\phi(u(x)^\frac{1}{\alpha}t)
+v(x)|t|^{\beta-1}\phi(v(x)^\frac{1}{\beta}t)\big)
dt dz.
\end{aligned}
$$
Here $|\phi|\le1$ is applied.
Thus, to prove (\ref{3.3}),  it suffices to show
\begin{equation}\label{jida}
\begin{aligned}
&\ \ \int |(P_0^{(2)}f)(x-t,y)|
\big(u(x)|t|^{\alpha-1}\phi(u(x)^\frac{1}{\alpha}t)
+v(x)|t|^{\beta-1}\phi(v(x)^\frac{1}{\beta}t)\big)
dt\\
\lesssim&\  \mathcal{M}^{(1)}(P_0^{(2)}f)(x,y),
\end{aligned}
\end{equation}
the left side of which   is  the sum of
$$\int |(P_0^{(2)}f)(x-t,y)|
u(x)|t|^{\alpha-1}\phi(u(x)^\frac{1}{\alpha}t)
dt$$
and
$$\int |(P_0^{(2)}f)(x-t,y)|v(x)|t|^{\beta-1}\phi(v(x)^\frac{1}{\beta}t)dt.$$
We only show the estimate of the former  since that of the  latter is similar.
If $\alpha>1$, we apply
$u(x)|t|^{\alpha}\le 2^{\alpha}$ derived from the support of $\phi(u(x)^\frac{1}{\alpha}t)$ to obtain
$$
\begin{aligned}
&\ \ \int |(P_0^{(2)}f)(x-t,y)|
u(x)|t|^{\alpha-1}\phi(u(x)^\frac{1}{\alpha}t)
dt\\
\lesssim_\alpha&\ u(x)^{\frac{1}{\alpha}}\int_{|t|\le 2u(x)^{-\frac{1}{\alpha}}} |(P_0^{(2)}f)(x-t,y)|
dt,
\end{aligned}
$$
which is bounded by a constant multiple of $\mathcal{M}^{(1)}(P_0^{(2)}f)(x,y)$.
If $0<\alpha<1$,
we have
\begin{equation}\label{3.00}
\int |(P_0^{(2)}f)(x-t,y)|
u(x)|t|^{\alpha-1}\phi(u(x)^\frac{1}{\alpha}t)
dt
\le\ \sup_{\epsilon>0}\big\{\int |P_0^{(2)}f(x-t,y)| \epsilon^{-1} K(\epsilon^{-1}t)dt\big\}
\end{equation}
where
 $K(t)=|t|^{\alpha-1}\phi(t).$
 It follows from $0<\alpha<1$ and the property of $\phi$ that
 $$\|K(t)\|_1\lesssim_\alpha 1,\ \ K(t)=K(|t|),\ \ K(t_1)\le K(t_2)$$
whenever $|t_1|\ge |t_2|$.  By  Theorem 2.1.10 in \cite{G08},  the right side  of (\ref{3.00}) is bounded by a constant multiple of $\mathcal{M}^{(1)}P_0^{(2)}f(x,y)$, which  completes the proof of (\ref{3.3}).
 \vskip.1in
 Next, we show (\ref{3.2}). Because of the support  of $\phi$, the region of integration is $I:=\{|t|\le \sigma(x)\}$, where $\sigma(x)=\min\{2u(x)^{-\frac{1}{\alpha}},
 2v(x)^{-\frac{1}{\beta}}\}$. A simple computation gives
 $$
 \begin{aligned}
 |T^{ll}_{\alpha,\beta}P_0^{(2)}f(x,y)|\le&\ \left|p.v.\int_{I} P_0^{(2)}f(x-t,y)\frac{dt}{t}\right|\\
 &+\left|p.v.\int_{I} P_0^{(2)}f(x-t,y) \frac{\phi(u(x)^\frac{1}{\alpha}t)-1}{t}
 \phi(v(x)^\frac{1}{\beta}t)dt\right|\\
 &+\left|p.v.\int_{I} P_0^{(2)}f(x-t,y) \frac{\phi(v(x)^\frac{1}{\beta}t)-1}{t}
 dt\right|.
 \end{aligned}
 $$
We bound the first term  by
 $H^\star P_0^{(2)}f(x,y)$, and the remainder  by $C\mathcal{M}^{(1)}P_0^{(2)}f(x,y)$. As a consequence,  $|T^{ll}_{\alpha,\beta}P_0^{(2)}f(x,y)|$ is
  bounded by a constant multiple of
 $$H^\star P_0^{(2)}f(x,y)+\mathcal{M}^{(1)}P_0^{(2)}f(x,y).$$
  This concludes the proof of (\ref{3.2}).
  \vskip.3in
\section{Proof of Lemma \ref{l2}}
\label{s6}
\vskip .1in
We prove Lemma \ref{l2} in section \ref{s4}.
 The advantage for the case $p=2$ is that  Minkowski's inequality plays a positive role.
By Littlewood-Paley theory, (\ref{4.2}) for $p=2$ is a direct result of
\begin{equation}\label{d1}
 \|T_{k,m,j}(P_k^{(2)}f)\|_2\lesssim\ 2^{-\max\{j\beta,m\alpha\}\tau_2} \|P_k^{(2)}f\|_2
 \end{equation}
 for some $\tau_2>0$. By the scaling arguments, (\ref{d1}) is equivalent to   the case $k=0$, that is,
\begin{equation}\label{100}
\begin{aligned}
\|\int_{\R} (P_0^{(2)}f)(x-t,y-\Gamma(x,t))&  \psi_l (u(x)^\frac{1}{\alpha}t)
\psi_m(v(x)^\frac{1}{\beta}t)\frac{dt}{t}\|_2\\
\lesssim&\ 2^{-\gamma_2'\max\{\beta m,\alpha l\}}\|P_0^{(2)}f\|_2
\end{aligned}
\end{equation}
holds for some $\gamma_2'>0$.
Applying Plancherel's  theorem in the $y$ variable again, it is enough to show
\begin{equation}\label{decay2}
\begin{aligned}
\|\int g(x-t,\eta)e^{-i\big(u(x)[t]^\alpha+v(x)[t]^\beta
\big)\eta}&\psi_l(u(x)^\frac{1}{\alpha}t)
\psi_m(v(x)^\frac{1}{\beta}t)\frac{dt}{t}\|_{L^2_\eta(L^2_x)}\\
\lesssim&\ 2^{-\gamma_2'\max\{\beta m,\alpha l\}}\|g\|_2.
\end{aligned}
\end{equation}
We bound the integral on the left side  by
$(\digamma\star \Phi_a^{\lambda,b})(x,\eta)$  with
$$
\digamma=g(x,\eta)=\mathcal{F}^y(P_0^{(2)}f)(x,\eta),\ a=2^lu(x)^{-\frac{1}{\alpha}},\
b=2^{-m}v(x)^\frac{1}{\beta}a,\
\ \lambda_1=2^{l\alpha}\eta,\ \lambda_2=\frac{v(x)}{2^{-l\beta}u(x)^\frac{\beta}{\alpha}}\eta.
$$
Then Lemma \ref{l2} gives (\ref{decay2}) with $2^{-\gamma_2'\max\{\beta m,\alpha l\}}$ replaced by  $2^{-\gamma_0\alpha l }$. Similarly,
the integral on the left side  can also be bounded by
$(\digamma\star \Phi_a^{\lambda,b})(x,\eta)$  with
$$
\digamma=g(x,\eta)=\mathcal{F}^y(P_0^{(2)}f)(x,\eta),\ a=2^mv(x)^{-\frac{1}{\beta}},\
b=2^{-l}u(x)^\frac{1}{\alpha}a,\
\ \lambda_1=2^{m\beta}\eta,\ \lambda_2=\frac{u(x)}{2^{-m\alpha}v(x)^\frac{\alpha}{\beta}}\eta.
$$
By Lemma \ref{l2}, we  obtain  (\ref{decay2}) with $2^{-\gamma_2'\max\{\beta m,\alpha l\}}$ replaced by  $2^{-\gamma_0\beta m }$. Combing with the above two bound yields  (\ref{decay2}) for $\gamma_2'=\gamma_0$.
\vskip.1in
 \vskip.2in
\section*{Acknowledgements}
\vskip .1in
The author would like to thank Prof. Jiecheng Chen for his encouragement.
 This work was supported by the NSF of China 11901301, the NSF of Jiangsu Province BK20180721,  the NSF of the Jiangsu Higher Education Institutions of China
(18KJB110018),.
\vskip.3in

\vskip .4in

\end{document}